\let\classAND\AND
\let\AND\relax
\let\AND\classAND
\long\def\@maketablecaption#1#2{\@tablecaptionsize
    \global \@minipagefalse
    \hbox to \hsize{\parbox[t]{\hsize}{\centering #1 \\ #2}}}
\newtheorem{remark}{Remark}
\newtheorem{theorem}{Theorem}
\newtheorem{lemma}{Lemma}
\newtheorem{corollary}{Corollary}
\newtheorem{assumption}{Assumption}
\newtheorem{problem}{Problem}
\newtheorem{definition}{Definition}
\newtheorem{proposition}{Proposition}
\DeclareMathOperator{\rank}{rank}
\DeclareMathOperator{\col}{col}
\DeclareMathOperator{\diag}{diag}
\DeclareMathOperator{\vech}{vech}
\DeclareMathOperator{\vecc}{vec}
\begin{document}

\begin{frontmatter}

\title{3DIOC: Direct Data-Driven Inverse Optimal Control \\ for LTI Systems} 

\author{Chendi Qu}\ead{qucd21@sjtu.edu.cn}, 
\author{Jianping He}\ead{jphe@sjtu.edu.cn}, 
\author{and Xiaoming Duan}\ead{xduan@sjtu.edu.cn} 
\address{Department of Automation, Shanghai Jiao Tong University, China}  
\begin{keyword}                       
Inverse Optimal Control; LTI systems; Data-driven Control
\end{keyword}  

\begin{abstract}                      This paper addresses the Direct Data-Driven Inverse Optimal Control (3DIOC) problem for linear time-invariant (LTI) systems under the linear quadratic (LQ) control. Unlike traditional approaches that require system identification, the proposed method learns the underlying objective function directly from measured input-output trajectories. Leveraging the input-output representation of LTI systems via the Fundamental Lemma, we derive a model-free optimality necessary condition (ONC) for the forward LQ problem, which forms the basis for formulating and solving an inverse optimal control problem. We also provide an identifiability condition to ensure the uniqueness of the inverse solution.
While the ONC-based IOC approach is effective in the noise-free case, its performance is not promising when the data is corrupted with noises. We then reformulate the 3DIOC as a bi-level optimization problem, which is solved using iterative gradient descent and offers solution guarantees. Furthermore, we analyze the relationship between the solution sets of the two proposed formulations, providing practical insights into their selection.
The simulation results validate the effectiveness and performance of our proposed methods.

\end{abstract}

\end{frontmatter}

\section{Introduction}
Inverse optimal control (IOC) is to identify the underlying objective function of a control system based on its optimal input/state/output trajectories \cite{ab2020inverse}. As an important branch of learning from demonstration (LfD) methods \cite{ravichandar2020recent}, 
IOC helps understand the high-level intention behind tasks, thereby achieving better generalization performance across environments compared to direct methods (e.g. behavioral cloning).
Recent years have witnessed growing attention in this topic. IOC has been widely applied in autonomous driving \cite{xu2022energy} and human-robot collaborations \cite{mainprice2016goal}, where the learned objective function is used for human intention prediction. 
For imitation learning, robots learn new skills by identifying the objective given expert's demonstration \cite{ruan2023causal}. Other applications include finding the proper cost function for transportation network system, and learning risk preferences of investors, which are extensively studied in Inverse Optimization \cite{chan2025inverse}.

Previous studies on IOC usually assume both the objective function model and the system dynamic equations are known. Algorithms can be classified according to different objective function models. One common option is the linear quadratic form. \cite{priess2014solutions} studies the inverse infinite-time linear quadratic regulator (LQR) control assuming the constant feedback gain matrix is already known, while \cite{yu2021system, qu2023control, zhang2024statistically} identify the weighting matrices for finite-time case. Another general form of the objective function is a weighted sum of specified features \cite{jin2022learning}. \cite{baimukashev2024automated} studies selecting features automatically to relax this prior knowledge assumption.

As for the system dynamic, however, it is difficult to accurately obtain in practice, restricting the application of previous IOC algorithms. Inspired by the forward controller design, data-driven methods can be divided into direct and indirect ways. When considering the indirect data-driven method, System Identification (SI) \cite{ljung2010perspectives} is conducted as a pre-process, while the identification error may affect the subsequent learning performance of the objective function in an unexpected way \cite{byeon2022inverse}. Direct data-driven control based on fundamental lemma has gained increasing interests in these years \cite{verheijen2023handbook,berberich2020data,de2019formulas}, since it requires less system knowledge for implementation than SI and less reliable data than general learning algorithms. It is proved that an LTI system can be equally represented by its collected
input and output trajectories under certain persistent excitation
conditions. Thus, the controller can be designed directly with this input-output representation.

{To the best of our knowledge, model-free IOC remains an issue that has not been fully resolved. Some efforts have been made under the Markov decision process (MDP) framework \cite{ziebart2008maximum,xue2021inverse,garrabe2023convex,garrabe2025convex}.}
{Considering the optimal control problem, \cite{guo2023imitation} tackles a data-driven imitation learning for Linear Quadratic Gaussian control by directly learning the policy gain matrix. It focuses on the infinite horizon case and the gain is static, while our setting is finite horizon and the policy is time-varying.} {\cite{donge2022multiagent,donge2024efficient} investigate the model-free inverse Reinforcement Learning (IRL) problem for multi-agent systems (MAS) defined by Graphical Apprentice Games including two updates.}
Inspired by the deep Koopman representation of the unknown system, \cite{liang2023data} proposes an IOC algorithm to achieve an optimal Koopman operator and the unknown weights estimation together through iterations. Though \cite{liang2023data} tackles a general nonlinear system, it cannot be directly applied to our problem, since it only considers state dynamics but we observe output trajectory instead of the state.
Moreover, these methods are expensive in computation and require substantial observation data due of the dual-loop framework.

Motivated by the above discussion, we develop a novel direct data-driven IOC algorithm. The main challenge lies in how to identify the objective function with less data and computational cost, and the identifiability analysis to the ill-posed inverse problem. {We utilize the fundamental lemma from the behavioral system theory \cite{markovsky2021behavioral}.} With the input-output representation built by collected trajectories, we establish the model-free Karush–Kuhn–Tucker (KKT) condition for LQ control and construct the IOC problem with this optimality necessary condition. We further remove the redundant parameters in the representation to improve the estimation efficiency and robustness. The proposed KKT-based 3DIOC requires a small amount of data including one offline stochastic trajectory and one optimal trajectory, and only needs to solve a quadratic programming. We also tackle the data-driven IOC problem in a bi-level optimization (BLO) formulation. We reveal an inclusion relationship between solutions obtained by KKT-based (or ONC-based) IOC and BLO-IOC.
The main contributions are summarized as follows:
\begin{itemize}
\item We propose a novel direct data-driven IOC approach of LQ control for LTI systems. {Based on the input-output representation, the IOC problem is built using derived model-free KKT conditions as a constraint.} After removing the redundant parameters in the representation, we develop a simplified 3DIOC that requires less computation and observation data.
\item We provide the identifiability condition for the ill-posedness of the inverse problem. Noise-corrupted case is considered with sensitivity analysis under perturbation.
\item We further establish the data-driven IOC in a bi-level optimization formulation. The gradient-based BLO-IOC algorithm converges to stationary points and the bilevel form is proved to possess asymptotical risk consistency with noisy data. A numerical enumeration method is also introduced to achieve the global optimum.
\item Numerical simulations on randomly generated LTI systems demonstrate the computation efficiency and identification performance. Comparing two algorithms, we discover an inclusion relationship between the solution sets in the noise-free case and observe a trade-off in computation cost and numerical stability in noise-corrupted case. This provides a valuable insight for selecting in two IOC approaches.
\end{itemize}

The remainder of the paper is organized as follows. Section \ref{preliminary} introduces the preliminaries and describes the IOC problem formulation. Section \ref{sec-kkt-ioc} provides the main results and algorithms for KKT-based IOC algorithm while Section \ref{sec-blo-ioc} discusses the BLO-IOC. Simulation experiments are shown in Section \ref{sim}, followed by the conclusion in Section \ref{conc}.

\begin{table*}[tb]
    \centering
    \caption{Overview of Proposed Problems and Algorithms}
    \label{tab:compare-algs}
    \begin{tabular}{c|c|c|c|c|c}
    \hline
    \multirow{2}{*}{Formulation} 
    & \multicolumn{3}{c|}{Noise-free + Identifiable} & \multicolumn{2}{c}{Noisy $w^o$} \\ \cline{2-6}
    & Problem & \makecell{Algorithm \\ (Solution set)} & Connection  & Problem & Guarantee  \\ \hline
     \makecell{KKT-based IOC \\ (Sec \ref{sec-kkt-ioc})} & Problem \ref{sim_3Dioc} & Alg. \ref{kkt-ioc-alg} ($\mathcal{I}_1$) & \multirow{3}{*}{ \makecell{$\{\Theta$: Scaling invariant \\ set of true $Q,R \}$ \\ \ding{192} $\mathcal{I}_1 \subseteq \Theta, \mathcal{I}_3 \subseteq \Theta$  \\
     \ding{193} $\mathcal{I}_1 \subseteq \mathcal{I}_2, \mathcal{I}_2 \not\subset \Theta$}}  & Problem \ref{inf_qr_pro} & \makecell{Convexity,\\Sensitivity} \\ 
     \cline{1-3} \cline{5-6}
     \multirow{2}{*}{\makecell{BLO-IOC \\ (Sec \ref{sec-blo-ioc})}} & \multirow{2}{*}{Problem \ref{blo-ioc}}  & Alg. \ref{blo-alg} ($\mathcal{I}_2$)  & & \multirow{2}{*}{\makecell{Problem \eqref{blo-ioc-noisy} \\ (with noisy $\Tilde{u}^p$)}} & \multirow{2}{*}{\makecell{Convergence,\\ Risk consistent}}  \\ \cline{3-3} 
     & & Alg. \ref{enum-alg} ($\mathcal{I}_3$) &  & & \\ \hline
    \end{tabular}
\end{table*}

\noindent \textbf{Notations}: For a matrix $A \in \mathbb{R}^{n \times m}$, $A^{\dagger}$ denotes its Moore-Penrose pseudo inverse. $A(i:j,:)$ (or $A(:,i:j)$) denotes the matrix composed by the $i$ to $j$ rows (or columns). $A\otimes B$ is the Kronecker product between matrices $A$ and $B$. {$\vech(A) \in \mathbb{R}^{\frac{1}{2}n(n+1)}$ represents the half-vectorization of a symmetric matrix $A \in \mathbb{S}^{n}$.} For a series of vectors $v_1,\dots, v_k$, $\col(v_1,\dots, v_k) = (v_1^T,\dots, v_k^T)^T$. $\diag(v)$ represents the diagonal matrix with diagonal elements being the vector $v$. $\diag(A_1,\dots,A_k)$ is the block diagonal matrix composed of matrices $A_1,\dots,A_k$ on the diagonal and $\col(A_1,\dots,A_k) = (A_1^T, \dots, A_k^T)^T$. $w \wedge w'$ denotes the concatenation of two trajectories $w,w'$. $\mathbb{N}(A)$ is the null space of a matrix $A$.

\section{Preliminaries and Problem Formulation}\label{preliminary}
\subsection{Problem Description}
{Consider an LTI system $\mathscr{B}$ that admits a minimal state-space realization:}
\begin{equation}\label{sys}
x_{k+1} = A x_k +Bu_k,~ y_k = C x_k +D u_k,
\end{equation}
where $A \in \mathbb{R}^{n \times n}, B\in \mathbb{R}^{n \times m},C\in \mathbb{R}^{p \times n},D\in \mathbb{R}^{p \times m}$, and $x_k, u_k, y_k$ are respectively the system state, control input and output at time $k$. {The lag of the system $l$ is defined by the smallest integer $l>0$ such that the observability matrix $\mathcal{O}_l:=\col(C, CA,\dots, CA^{l-1})$ has rank $n$. Denote the set of all $L$-length input-output trajectories generated by $\mathscr{B}$ from arbitrary initial states as $\mathscr{B}|_L$.} We have the following assumptions throughout the paper:
\begin{assumption}[Model-free]\label{as-model}
We have no prior knowledge about the matrices $A,B,C,D$ in \eqref{sys}. However, one input-output trajectory 
\begin{equation}
w^d := \{u^d_k, y^d_k\}_{k=0}^{T-1}\in \mathscr{B}|_T,
\end{equation} 
of length $T$ is available.
\end{assumption} 

Assume the system $\mathscr{B}$ in \eqref{sys} is driven by a forward optimal LQ controller described as:
\begin{equation}\label{model-lq}
\mathbf{P}_0: \begin{aligned}
&  \min_{u_{0:N'-1}, y_{0:N'-1}} ~
\sum_{k=0}^{N'-1} y_k^T Q y_k + u_k^T R u_k\\
& ~~~~~~~
s.t. ~~~~~~~~ \eqref{sys}, x_0 = \bar{x},
\end{aligned}
\end{equation}
where $R\in \mathbb{S}^{m}$ and $Q \in \mathbb{S}^{p}$ are positive definite matrices, $\bar{x}$ is the initial state, and $N'$ is the control horizon. We have access to an optimal input-output trajectory denoted by
\begin{equation}\label{wo}
w^o := \{u^o_k,y^o_k\}_{k=0}^{N'-1} \in \mathscr{B}|_{N'}.
\end{equation}
The inverse problem tackled in this paper is described as follow.
\begin{problem}\label{pro-desc}
The direct data-driven IOC problem is to identify the weighting matrices $Q$ and $R$ in the control objective function in problem $\mathbf{P}_0$ directly from collected trajectories $w^d$ and $w^o$.
\end{problem}

\subsection{Preliminary: Fundamental Lemma}
In this section, we introduce the Fundamental Lemma from the behavioral system theory. Behavioral system theory views the LTI dynamical system as a subspace of the signal space in which the system trajectories live. Define a persistently exciting property for the input $u^d$ as follow.

\begin{definition}\label{def-pe}
The input signal $u^d=\{u^d_k\}_{k=0}^{T-1}$ is persistently exciting of order $L$ if the corresponding Hankel matrix
\begin{equation}\nonumber
\mathcal{H}_L(u^d):=\begin{pmatrix}
u^d_0 & \cdots & u^d_{T-L} \\
\vdots & \ddots & \vdots \\
u^d_{L-1} & \cdots & u^d_{T-1}
\end{pmatrix}
\end{equation}
has full row rank, {where $L,T \in \mathbb{Z}_+$ and $T \geq L$.}
\end{definition}
 
\noindent Based on Definition \ref{def-pe}, we have the fundamental lemma.
\begin{lemma}[Fundamental Lemma \cite{willems2005note}]\label{fund_l}
Consider the LTI system $\mathscr{B}$ in \eqref{sys}. If\\
a) system $\mathscr{B}$ is controllable, ~b) $w^d \in \mathscr{B}|_T$,\\
c) the input $u^d$ is persistently exciting of order $L+n$,\\
then we have
\begin{equation}
\mathscr{B}|_L = \mathrm{image}~ \mathcal{H}_L(w^d),
\end{equation}
where 
\[\mathcal{H}_L(w^d) = \begin{pmatrix}
\col(u_0^d, y_0^d) & \cdots & \col(u_{T-L}^d, y_{T-L}^d) \\
\vdots & \ddots & \vdots \\
\col(u_{L-1}^d, y_{L-1}^d) & \cdots & \col(u_{T-1}^d, y_{T-1}^d)
\end{pmatrix}.\]
\end{lemma}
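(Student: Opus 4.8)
The plan is to prove the set equality $\mathscr{B}|_L = \mathrm{image}\,\mathcal{H}_L(w^d)$ by a double inclusion and to reduce everything to a single rank count. The first inclusion, $\mathrm{image}\,\mathcal{H}_L(w^d) \subseteq \mathscr{B}|_L$, is immediate and I would dispatch it first. Each column of $\mathcal{H}_L(w^d)$ is the restriction of the given trajectory $w^d \in \mathscr{B}|_T$ to a length-$L$ time window, and by time-invariance such a restriction is itself an element of $\mathscr{B}|_L$. Because $\mathscr{B}$ is linear, $\mathscr{B}|_L$ is a linear subspace of the signal space $(\mathbb{R}^{m+p})^L$, so any linear combination of columns of $\mathcal{H}_L(w^d)$, i.e. any element of its image, again lies in $\mathscr{B}|_L$. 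Since the image is always contained in $\mathscr{B}|_L$, equality of the two subspaces is equivalent to equality of their dimensions, so it suffices to show $\rank \mathcal{H}_L(w^d) = \dim \mathscr{B}|_L$.

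Next I would pin down both quantities. For the right-hand side, I invoke the standard structural dimension formula for controllable behaviors: for $L \ge l$, $\dim \mathscr{B}|_L = mL + n$, where $m$ is the number of inputs and $n$ the order of the system. Without loss of generality I take $(A,B,C,D)$ to be a minimal (controllable and observable) realization, which changes neither $\mathscr{B}$ nor the data matrices and only strengthens the persistent-excitation hypothesis. The target therefore becomes $\rank \mathcal{H}_L(w^d) = mL + n$. To attack the left-hand side I introduce the latent state matrix $X := \col(x^d_0, x^d_1, \dots, x^d_{T-L})^{T}$ reorganized as $X \in \mathbb{R}^{n \times (T-L+1)}$, collecting the states underlying $w^d$ at the start of each window. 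Using $y_{i+j} = CA^{j}x_i + \sum_{k=0}^{j-1} CA^{j-1-k}B\,u_{i+k} + D u_{i+j}$, the output Hankel factors through the input Hankel and the states, giving, up to a permutation of rows,
\[
\mathcal{H}_L(w^d) = \begin{pmatrix} I & 0 \\ \mathcal{M} & \Gamma \end{pmatrix}\begin{pmatrix} \mathcal{H}_L(u^d) \\ X \end{pmatrix},
\]
where $\Gamma = \col(C, CA, \dots, CA^{L-1})$ is the extended observability matrix and $\mathcal{M}$ is the lower-triangular Toeplitz matrix of Markov parameters. For a minimal realization and $L \ge l$, $\Gamma$ has full column rank $n$, so the left factor is injective and rank is preserved. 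Thus it remains only to prove that the input-state Hankel matrix $\col(\mathcal{H}_L(u^d), X)$ has full row rank $mL + n$.

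The heart of the argument is this input-state rank claim, which is where persistent excitation and controllability genuinely interact. I would argue by contradiction on the left kernel: suppose a nonzero row vector $(a^{T}\ b^{T})$ with $a \in \mathbb{R}^{mL}$, $b \in \mathbb{R}^{n}$ annihilates it, so that
\[
\sum_{j=0}^{L-1} a_j^{T} u^d_{i+j} + b^{T} x^d_{i} = 0 \quad \text{for all } i = 0,\dots,T-L.
\]
Expanding $x^d_i$ through the reachability map and shifting the window index, this relation propagates into a nontrivial left dependency among the rows of the higher-order input Hankel $\mathcal{H}_{L+n}(u^d)$, contradicting the assumed persistent excitation of order $L+n$ — unless controllability first forces $b = 0$, after which full row rank of $\mathcal{H}_L(u^d)$ (again from persistent excitation) forces $a = 0$. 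Combining the pieces, $\rank \mathcal{H}_L(w^d) = mL + n = \dim \mathscr{B}|_L$, and together with the easy inclusion this yields the claimed equality.

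The main obstacle is precisely this input-state rank lemma. The bookkeeping needed to turn the annihilating relation into a genuine row dependency of $\mathcal{H}_{L+n}(u^d)$ — carefully tracking how the state recursion $x_{i+1} = A x_i + B u_i$ couples successive windows and how controllability rules out a degenerate $b$ — is the delicate step, whereas the dimension formula, the Toeplitz factorization, and the double inclusion are essentially structural and routine.
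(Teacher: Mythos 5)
First, a point of reference: the paper itself gives no proof of Lemma~\ref{fund_l} --- it is imported verbatim from \cite{willems2005note} --- so your attempt can only be compared against the original argument in the literature. Your architecture does match that argument: the easy inclusion $\mathrm{image}\,\mathcal{H}_L(w^d) \subseteq \mathscr{B}|_L$, the dimension formula $\dim \mathscr{B}|_L = mL+n$, the factorization of $\mathcal{H}_L(w^d)$ through $\col(\mathcal{H}_L(u^d),X)$ via the Toeplitz/observability block matrix, and the reduction to the input--state rank claim $\rank \col(\mathcal{H}_L(u^d),X) = mL+n$. One secondary issue there: your rank-preservation step requires $\Gamma$ to be injective, i.e.\ $L \geq l$, which the lemma does not assume; for $L < l$ one must instead conclude $\rank \mathcal{H}_L(w^d) = mL + \rank\Gamma = \dim\mathscr{B}|_L$ directly from the full row rank of $\col(\mathcal{H}_L(u^d),X)$, which is a fixable but real omission.

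The genuine gap is the input--state rank claim itself, which you defer as ``bookkeeping'' --- it is not bookkeeping, it is the entire content of the lemma, and your sketch of it would fail as stated. The relation $\sum_{j} a_j^T u^d_{i+j} + b^T x^d_i = 0$ cannot be turned into a row dependency of $\mathcal{H}_{L+n}(u^d)$ merely by ``expanding $x^d_i$ through the reachability map and shifting the window index'': every such expansion $x_{i+k} = A^k x_i + (\text{reachability terms in } u)$ leaves a residual state term, so shifting only yields the family of relations $\xi_k\, \mathcal{H}_{L+k}(u^d) + b^T A^k X^{(k)} = 0$ with $\xi_k = (b^T A^{k-1}B, \dots, b^T B, a^T)$ and $X^{(k)}$ the matrix of window-initial states --- never a pure input dependency. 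The missing idea, which is the heart of the proof in \cite{willems2005note} and its modern streamlined versions, is to zero-pad each $\xi_k$ to length $m(L+n)$ and take their linear combination weighted by the coefficients of the characteristic polynomial of $A$, so that Cayley--Hamilton annihilates the accumulated state terms and produces a single vector $\zeta$ in the left kernel of $\mathcal{H}_{L+n}(u^d)$. Only at that point does persistent excitation enter: it forces $\zeta = 0$, whose trailing blocks give $a = 0$ and whose leading blocks give $b^T A^i B = 0$ for $i = 0,\dots,n-1$, whence controllability finally gives $b=0$. Note this is the reverse of your sketch's logic: controllability cannot ``first force $b=0$''; it is the last step, usable only after PE and the block structure of $\zeta$ have been exploited. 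Without the Cayley--Hamilton elimination no contradiction with persistent excitation is ever reached, so the proposal is incomplete at precisely its critical step.
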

This lemma reveals that under conditions a)-c), any $L$-length trajectory generated by $\mathscr{B}$ can be represented through a linear combination of the columns of $\mathcal{H}_L(w^d)$. The image of $\mathcal{H}_L(w^d)$ is an input-ouput representation of $\mathscr{B}$.
\begin{lemma}[Corollary 19, \cite{markovsky2022identifiability}]\label{gen_fl}
Consider the LTI system $\mathscr{B}$ in \eqref{sys}. For $L \geqslant \max(l,n)$ and the trajectory $w^d$, we have $\mathscr{B}|_L = \mathrm{image}~ \mathcal{H}_L(w^d)$ if and only if
\begin{equation}\label{wd_PE}
\rank(\mathcal{H}_L(w^d))=mL+n.
\end{equation}
\end{lemma}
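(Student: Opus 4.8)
The plan is to reduce the claimed equivalence to a single dimension count. Both $\mathscr{B}|_L$ and $\mathrm{image}\,\mathcal{H}_L(w^d)$ are linear subspaces of the signal space $\mathbb{R}^{(m+p)L}$, and I would first argue that the inclusion $\mathrm{image}\,\mathcal{H}_L(w^d) \subseteq \mathscr{B}|_L$ holds unconditionally (given only $w^d \in \mathscr{B}|_T$). Once this inclusion is in hand, the two subspaces coincide if and only if they have the same dimension; and since $\dim \mathrm{image}\,\mathcal{H}_L(w^d) = \rank \mathcal{H}_L(w^d)$ by definition, the whole statement collapses to comparing $\rank \mathcal{H}_L(w^d)$ against $\dim \mathscr{B}|_L$.

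First I would verify the inclusion. Each column of $\mathcal{H}_L(w^d)$ is a length-$L$ window $\col(\col(u^d_j,y^d_j),\dots,\col(u^d_{j+L-1},y^d_{j+L-1}))$ of the data trajectory. Because $\mathscr{B}$ is time-invariant and $w^d \in \mathscr{B}|_T$, every such window is itself an admissible length-$L$ trajectory, i.e. it lies in $\mathscr{B}|_L$. Since $\mathscr{B}$ is linear, $\mathscr{B}|_L$ is a subspace, so any linear combination of the columns — that is, any element of $\mathrm{image}\,\mathcal{H}_L(w^d)$ — again lies in $\mathscr{B}|_L$. This gives $\mathrm{image}\,\mathcal{H}_L(w^d)\subseteq \mathscr{B}|_L$ with no rank hypothesis whatsoever.

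Next I would pin down $\dim \mathscr{B}|_L$. The key structural fact is that, for $L \geq l$, a length-$L$ trajectory of a minimal LTI system is in bijective linear correspondence with the pair $(x_0,u_{0:L-1}) \in \mathbb{R}^n \times \mathbb{R}^{mL}$: the state recursion and output equation map such a pair to a trajectory, and for $L \geq l$ (so that the zero-input response is recovered over the window and the state is observable) distinct pairs produce distinct trajectories. Hence this parametrization is a linear isomorphism and $\dim \mathscr{B}|_L = mL + n$. Since $l \leq n$ always, the hypothesis $L \geq \max(l,n) = n$ guarantees $L \geq l$, so the formula applies.

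Finally, combining the pieces: $\mathrm{image}\,\mathcal{H}_L(w^d)\subseteq \mathscr{B}|_L$, and for nested finite-dimensional subspaces equality holds iff the dimensions agree. Thus $\mathrm{image}\,\mathcal{H}_L(w^d)=\mathscr{B}|_L$ iff $\rank \mathcal{H}_L(w^d) = \dim \mathscr{B}|_L = mL+n$, which proves both directions at once. I expect the main obstacle to be the dimension formula $\dim \mathscr{B}|_L = mL+n$: establishing injectivity of the map $(x_0,u_{0:L-1})\mapsto w$ requires the lag/observability argument together with minimality of the representation, and it is the only place where the hypothesis $L\geq\max(l,n)$ is genuinely used. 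Everything else is elementary linear algebra on nested subspaces.
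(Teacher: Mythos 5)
Your proof is correct, and it is essentially the argument behind the result as the paper uses it: the paper itself supplies no proof of this lemma (it is imported verbatim as Corollary 19 of \cite{markovsky2022identifiability}), and the standard proof of that corollary rests on exactly your two ingredients --- the unconditional inclusion $\mathrm{image}\,\mathcal{H}_L(w^d)\subseteq\mathscr{B}|_L$ from linearity and shift-invariance, and the dimension formula $\dim\mathscr{B}|_L = mL+n$ for $L\geqslant l$, with equality of nested finite-dimensional subspaces decided by comparing dimensions. Your identification of the only delicate point is also accurate: the injectivity of $(x_0,u_{0:L-1})\mapsto w$, hence the dimension formula, is where minimality (observability, so that the lag bounds the window length needed to recover the state) is genuinely used, and without reading $n$ and $l$ as invariants of a minimal representation the statement would fail.
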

Lemma \ref{gen_fl} generalizes the fundamental lemma, providing a sufficient and necessary condition without the controllability requirement. Equation \eqref{wd_PE} is generalized persistency of excitation (PE) condition. We utilize Lemma \ref{gen_fl} to establish the input-output representation for $\mathscr{B}$ with $w^d$ in the following section.

\section{Direct Data-Driven IOC}\label{sec-kkt-ioc}
In this section, we first derive the model-free KKT condition for the LQ problem $\mathbf{P}_0$ based on the input-output representation introduced in the preliminary. We build the direct data-driven IOC problem with the obtained optimality necessary condition. Then we simplify the algorithm by removing the redundant parameters in the representation to improve the efficiency.
\subsection{Model-free KKT Condition}
To obtain the model-free KKT condition, we firstly reformulate the problem $\mathbf{P}_0$ as a data-enabled LQ control.

For the collected optimal trajectory $w^o$ in \eqref{wo}, we partition it as an initial trajectory of length $T_{\textup{ini}}$ and the remaining sequence of length $N=N'-T_{\textup{ini}}$, that is
\begin{equation}\label{wo_split}
w^o = w^{\textup{ini}} \wedge \Tilde{w}, ~ w^{\textup{ini}} \in \mathscr{B}|_{T_{\textup{ini}}}, ~ \Tilde{w} \in \mathscr{B}|_{N}.
\end{equation}
For notational convenience, we re-index the element of $w^{\textup{ini}}, \Tilde{w}$ to start from zero, so that $w^{\textup{ini}}=\{u^{\textup{ini}}_k, y^{\textup{ini}}_k\}_{k=0}^{T_{\textup{ini}}-1}$ and $\Tilde{w} = \{\Tilde{u}_k,\Tilde{y}_k\}_{k=0}^{N-1}$, where $u^{\textup{ini}}_k := u^o_k$ for $k=0,\dots,T_{\textup{ini}}-1$ and $\Tilde{u}_k := u^o_{T_{\textup{ini}}+k}$ for $k=0,\dots,N-1$ (also $y^{\textup{ini}}_k, \Tilde{y}_k$).
The following lemma guarantees the uniqueness of the initial state for a $T_{\textup{ini}}$-length trajectory $w^d$ with inputs.
\begin{lemma}[Lemma 1, \cite{markovsky2008data}]\label{lem:uniq}
For a trajectory $w^{\textup{ini}} \in \mathscr{B}|_{T_{\textup{ini}}}$ and an arbitrary $N$-length input sequence $\{u_k\}_{k=0}^{N-1}$, if $T_{\textup{ini}} \geqslant l$, there exists a unique initial state $x_0^{\textup{ini}}$ and unique outputs $\{y_k\}_{k=0}^{N-1}$, such that $w^{\textup{ini}} \wedge w \in \mathscr{B}|_{T_{\textup{ini}}+N}$.
\end{lemma}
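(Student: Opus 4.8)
The plan is to reduce the claim to the injectivity of the extended observability matrix of $\mathscr{B}$. First I would unroll the recursion \eqref{sys} to express the stacked outputs of any $T_{ini}$-length trajectory as an affine function of the initial state and the stacked inputs. Writing $\mathbf{y}^{ini} = \col(y^{ini}_0,\dots,y^{ini}_{T_{ini}-1})$ and $\mathbf{u}^{ini} = \col(u^{ini}_0,\dots,u^{ini}_{T_{ini}-1})$, iterating $x_{k+1}=Ax_k+Bu_k$ and substituting into $y_k=Cx_k+Du_k$ gives
\begin{equation}\nonumber
\mathbf{y}^{ini} = \mathcal{O}_{T_{ini}}\, x_0^{ini} + \mathcal{T}_{T_{ini}}\, \mathbf{u}^{ini},
\end{equation}
where $\mathcal{O}_{T_{ini}} = \col(C, CA,\dots,CA^{T_{ini}-1})$ is the extended observability matrix and $\mathcal{T}_{T_{ini}}$ is the lower-triangular Toeplitz matrix of Markov parameters (with $D$ on its diagonal and block $CA^{k-1-j}B$ below it). Since $w^{ini}\in\mathscr{B}|_{T_{ini}}$ by hypothesis, at least one $x_0^{ini}$ satisfies this identity, which settles existence immediately.

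For uniqueness of the initial state, suppose two states $x_0$ and $x_0'$ generate the same $w^{ini}$ under the same inputs. Subtracting the two copies of the affine identity cancels the common $\mathcal{T}_{T_{ini}}\mathbf{u}^{ini}$ term and leaves $\mathcal{O}_{T_{ini}}(x_0-x_0')=0$. The crux is then to show that $\mathcal{O}_{T_{ini}}$ has full column rank $n$, and this is exactly where the hypothesis $T_{ini}\geqslant l$ enters: the lag $l$ of $\mathscr{B}$ is its observability index, so by definition $\mathcal{O}_l$ already has $\rank \mathcal{O}_l = n$; since $\mathcal{O}_l$ is precisely the top block of $\mathcal{O}_{T_{ini}}$ whenever $T_{ini}\geqslant l$, appending further rows cannot enlarge the kernel, so $\mathcal{O}_{T_{ini}}$ also has trivial kernel. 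Hence $x_0-x_0'=0$ and $x_0^{ini}$ is unique.

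Once $x_0^{ini}$ is pinned down, the outputs of the extension are determined by forward propagation alone. The inputs of $w^{ini}$ drive $x_0^{ini}$ deterministically to the junction state $x_{T_{ini}}$ through $x_{k+1}=Ax_k+Bu_k$, and then the arbitrary $N$-length input sequence $\{u_k\}_{k=0}^{N-1}$ produces a unique state trajectory and therefore unique outputs $\{y_k\}_{k=0}^{N-1}$ via $y_k=Cx_k+Du_k$. This yields $w^{ini}\wedge w\in\mathscr{B}|_{T_{ini}+N}$ together with the asserted uniqueness.

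I expect the main obstacle to be stating the observability-index step cleanly rather than the algebra itself: the lemma's only quantitative hypothesis is $T_{ini}\geqslant l$, and the entire argument hinges on correctly translating the behavioral notion of \emph{lag} into the rank saturation $\rank \mathcal{O}_{T_{ini}}=n$ (which tacitly presumes a minimal, hence observable, realization so that a unique $x_0^{ini}$ can exist at all). The existence and forward-propagation parts are routine once the affine output map above is written down.
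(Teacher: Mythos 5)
The paper does not prove this lemma at all: it is imported verbatim by citation from Markovsky and Rapisarda's work on data-driven simulation, so there is no in-paper argument to compare against. Your proof is correct and is essentially the standard argument behind that cited result: existence of $x_0^{ini}$ follows from $w^{ini}\in\mathscr{B}|_{T_{ini}}$, uniqueness follows from $\ker \mathcal{O}_{T_{ini}}\subseteq\ker\mathcal{O}_{l}=\{0\}$ once the lag is identified with the observability index, and uniqueness of the future outputs follows by deterministic forward propagation of the now-unique state through the given inputs. Your closing caveat is also the right one to flag: uniqueness of the initial state genuinely requires an observable (minimal) realization of $\mathscr{B}$ --- otherwise only the future outputs, not $x_0^{ini}$, are unique --- and this minimality assumption is implicit in the cited reference and left unstated in the present paper's system description \eqref{sys}.
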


For the measured trajectory $w^d$, we define
\[
\begin{pmatrix}
U_p \\ U_f
\end{pmatrix} := \mathcal{H}_{T_{\textup{ini}}+N} (u^d), \begin{pmatrix}
Y_p \\ Y_f
\end{pmatrix} := \mathcal{H}_{T_{\textup{ini}}+N} (y^d),
\]
where $U_p$ is the first $T_{\textup{ini}}$ row of Hankel matrix $\mathcal{H}_{T_{\textup{ini}}+N}(u^d)$ and $U_f$ is the remaining $N$ row (similarly for $Y_p$ and $Y_f$). 
Then we build an offline data matrix based on $w^d$
\begin{equation}\nonumber
H_{T_{\textup{ini}}+N} (w^d) =: \col(U_p, Y_p, U_f, Y_f) \sim \mathcal{H}_{T_{\textup{ini}}+N} (w^d),
\end{equation}
{where $\sim$ denotes a permutation similarity involving
rows.}

\noindent Therefore, according to Lemma \ref{gen_fl}, if $T_{\textup{ini}}+N \geqslant \max(l,n)$, we have the following proposition for $w^d$.
\begin{proposition}\label{prop:pe}
The data matrix $H_{T_{\textup{ini}}+N} (w^d)$ is an input-output representation of system $\mathscr{B}$ if the trajectory $w^d$ satisfies the generalized PE condition \eqref{wd_PE} which is
\begin{equation}\label{as-data}
\begin{aligned}
\rank(H_{T_{\textup{ini}}+N} (w^d)) 
= m(T_{\textup{ini}}+N) + n.
\end{aligned}
\end{equation}
\end{proposition}
{
\begin{remark}[Choice of $T_{ini}$ and $N$]
Proposition \ref{prop:pe} requires $T_{\textup{ini}}\geq l$ and $T_{\textup{ini}}+N\geq \max(l,n)$, while $l,n$ are unknown system quantities under our model-free assumption. Due to $l\leq n$ by Cayley-Hamilton theorem, an upper bound for
$n$ is sufficient. In practice, it is suggested to simply collect a
sufficiently long $w^d$ to exceed the necessary
amount for generalized PE condition. Sweep $N'$ over a small range and select the smallest $N'$ for which $\hat n(\mathscr{B}) = \rank\!\big(H_{N'}(w^d)\big)-mN'$ becomes stable. For $T_{\textup{ini}}$, one may increase it from a candidate value until $\|Y_f N_H\|$ becomes (numerically) negligible where columns of $N_H$ form a basis of $\mathbb{N}(\col(U_p,Y_p,U_f))$, to ensure the uniqueness described in Lemma \ref{lem:uniq}.
\end{remark}}

Based on above discussions, we build the data-enabled LQ problem given $w^d$ and $w^{\textup{ini}}$:
\begin{equation}\nonumber
\mathbf{P}_1: \begin{aligned}
& \min_{g,u,y} ~~~ y^T \mathcal{Q} y + u^T \mathcal{R} u\\
& ~ s.t. ~~~ \col(U_p, Y_p, U_f, Y_f) g = \col(u^{\textup{ini}},y^{\textup{ini}}, u, y),
\end{aligned}
\end{equation}
where $y = \col(y_0, y_1, \dots, y_{N-1}),u = \col(u_0, u_1, \dots, u_{N-1})$ and $\mathcal{Q} = \diag(Q,\dots,Q), \mathcal{R} = \diag(R,\dots,R)$. 
{In the constraint, $g\in \mathbb{R}^{T-T_{ini}-N+1}$ serves as a decision variable.} If \eqref{as-data} is satisfied, then the right side of the constraint is guaranteed to stay in the trajectory set $\mathscr{B}|_{T_{\textup{ini}}+N}$. If $T_{\textup{ini}} \geqslant l$, one solution $u$ combined with $u^{\textup{ini}},y^{\textup{ini}}$ ensures one unique output $y$. 
\begin{proposition}\label{rem-1}
If equation \eqref{as-data} and $T_{\textup{ini}} \geqslant l$ are fulfilled, the $N$-length trajectory $\Tilde{w}$ split from $w^o$ is exactly an optimal solution to problem $\mathbf{P}_1$.
\end{proposition}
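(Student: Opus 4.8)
The plan is to show that, in the trajectory variables $(u,y)$, problem $\mathbf{P}_1$ coincides with the tail subproblem of $\mathbf{P}_0$ over its last $N$ stages, and then to invoke the principle of optimality.

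First I would pin down the feasible set of $\mathbf{P}_1$. By the generalized PE condition \eqref{as-data} together with Lemma \ref{gen_fl}, the image of $\col(U_p,Y_p,U_f,Y_f)$ equals $\mathscr{B}|_{T_{ini}+N}$, so a triple $(g,u,y)$ is feasible if and only if $w^{ini} \wedge \{u,y\} \in \mathscr{B}|_{T_{ini}+N}$. Since $T_{ini} \geqslant l$, the uniqueness lemma guarantees that $w^{ini}$ fixes a unique internal state $x^\star$ at which the continuation begins, and that each input $u$ then determines $y$ uniquely. Thus the attainable $(u,y)$ are exactly the genuine system responses from the fixed state $x^\star$; the variable $g$ merely parametrizes them, and its possible non-uniqueness (from the kernel of the Hankel matrix) neither enlarges nor shrinks this set.

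Next I would identify $x^\star$ and rewrite the cost. As $w^{ini}$ is the first $T_{ini}$ samples of the optimal trajectory $w^o \in \mathscr{B}|_{N'}$, the state consistent with it is unique and equals the state $x_{T_{ini}}$ reached by the optimal closed loop at time $T_{ini}$. Using $\mathcal{Q}=\diag(Q,\dots,Q)$ and $\mathcal{R}=\diag(R,\dots,R)$, the objective of $\mathbf{P}_1$ is $\sum_{k=0}^{N-1}(y_k^T Q y_k + u_k^T R u_k)$. Hence, after shifting the time index by $T_{ini}$, $\mathbf{P}_1$ becomes the model-based finite-horizon LQ problem with dynamics \eqref{sys}, initial condition $x_0 = x^\star$, and horizon $N$.

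Finally I would apply Bellman's principle of optimality to $\mathbf{P}_0$: since its stage cost is additively separable, the tail of any optimal trajectory is optimal for the tail problem started at the intermediate optimal state. Because $\Tilde{w}$ is the tail of $w^o$ starting from $x_{T_{ini}}=x^\star$, it solves this tail problem and hence attains the minimum of $\mathbf{P}_1$; its feasibility is immediate because $w^o \in \mathscr{B}|_{T_{ini}+N} = \mathrm{image}\,\mathcal{H}_{T_{ini}+N}(w^d)$ supplies a $g$ realizing the constraint with $(u,y)=\Tilde{w}$. I expect the main obstacle to lie in the first step rather than in the Bellman argument: one must rigorously certify that the data-driven equality constraint introduces no spurious responses and omits no valid ones, i.e., that $\mathrm{image}\,\col(U_p,Y_p,U_f,Y_f)=\mathscr{B}|_{T_{ini}+N}$ and that fixing the prefix $w^{ini}$ leaves precisely the continuations from the single state $x^\star$. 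This is exactly where \eqref{as-data} (through Lemma \ref{gen_fl}) and $T_{ini}\geqslant l$ (through the uniqueness lemma) are indispensable; once the feasible set is secured, the reduction to the tail problem and the optimality of $\Tilde{w}$ follow from the separability of the LQ cost.
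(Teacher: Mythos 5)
Your proof is correct and follows essentially the same route the paper (briefly) sketches before stating the proposition: condition \eqref{as-data} via Lemma \ref{gen_fl} makes the feasible set of $\mathbf{P}_1$ coincide with the valid continuations of $w^{ini}$, the condition $T_{ini} \geqslant l$ pins down the unique output response (hence the unique continuation state), and optimality of the tail $\Tilde{w}$ then follows from additivity of the LQ cost via Bellman's principle. The paper leaves that last optimality-transfer step implicit, so your write-up is just a fuller version of the same argument.
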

Proposition \ref{rem-1} reveals a connection between the optimal solutions of $\mathbf{P}_0$ and $\mathbf{P}_1$. With the formulation of $\mathbf{P}_1$, we establish its model-free KKT conditions as follows.
\begin{lemma}[Model-free KKT]\label{kkt_lem}
The optimal input solution $u^*$ with its corresponding decision variable $g^*$ and output trajectory $y^*$ to $\mathbf{P}_1$ satisfy
\begin{equation}
\begin{aligned}
& \frac{\partial L}{\partial g} = {\lambda^*}^T H_{T_{\textup{ini}}+N} (w^d) = 0,\\
& \frac{\partial L}{\partial u} = 2 \mathcal{R} u^* - \lambda_u^* = 0, \frac{\partial L}{\partial y} = 2 \mathcal{Q} y^* - \lambda_y^* = 0,\\
& \frac{\partial L}{\partial \lambda} = H_{T_{\textup{ini}}+N} (w^d) g^* - \col(u^{\textup{ini}}, y^{\textup{ini}}, u^* , y^*) = 0,
\end{aligned}
\end{equation}
where $L$ denotes the Lagrangian function \begin{equation}\nonumber
\begin{aligned}
L(g,u,y,\lambda) = & y^T \mathcal{Q} y + u^T \mathcal{R} u + \lambda^T ({H}_{T_{\textup{ini}}+N}(w^d) g \\
&- \col(u^{\textup{ini}},y^{\textup{ini}}, u, y)),
\end{aligned}
\end{equation}
and for the costate we denote $\lambda = \col(
\lambda_{\textup{ini}}, \lambda_u, \lambda_y)$.
\end{lemma}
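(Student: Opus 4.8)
The plan is to treat $\mathbf{P}_1$ as a convex equality-constrained quadratic program and to read off its first-order conditions directly from the Lagrangian given in the statement. Since $\mathcal{Q}\succeq 0$ and $\mathcal{R}\succ 0$ are block-diagonal copies of $Q$ and $R$, the objective $y^T\mathcal{Q}y+u^T\mathcal{R}u$ is convex in the decision variables $(g,u,y)$, while the single constraint $\col(U_p,Y_p,U_f,Y_f)g=\col(u^{ini},y^{ini},u,y)$ is affine. Because the constraint is affine, a constraint qualification (the linearity/Abadie CQ) holds automatically at every feasible point, so the KKT stationarity and primal-feasibility conditions are necessary for any optimizer $(g^*,u^*,y^*)$. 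I would therefore introduce the costate $\lambda=\col(\lambda_{ini},\lambda_u,\lambda_y)$ partitioned to match the row-blocks $(U_p;Y_p)$, $U_f$, $Y_f$ of $H_{T_{ini}+N}(w^d)$, and form $L(g,u,y,\lambda)$ exactly as displayed.

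Next I would differentiate $L$ block by block. The variable $g$ enters only through the term $\lambda^T H_{T_{ini}+N}(w^d)\,g$, whereas the right-hand side $\col(u^{ini},y^{ini},u,y)$ is independent of $g$; hence $\partial L/\partial g=H_{T_{ini}+N}(w^d)^T\lambda=0$, which transposes to ${\lambda^*}^T H_{T_{ini}+N}(w^d)=0$. For $u$, the quadratic term contributes $2\mathcal{R}u$ and, since only the $U_f$-block of the constraint couples to $u$, the linear term contributes $-\lambda_u$, giving $2\mathcal{R}u^*-\lambda_u^*=0$; the identical computation on the $Y_f$-block yields $2\mathcal{Q}y^*-\lambda_y^*=0$. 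Finally $\partial L/\partial\lambda$ simply reproduces the equality constraint $H_{T_{ini}+N}(w^d)g^*-\col(u^{ini},y^{ini},u^*,y^*)=0$, which is primal feasibility. Collecting these four identities gives precisely the stated system.

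The computation is routine, so the points that need care are bookkeeping rather than genuine obstacles. First, one must align the partition of $\lambda$ with the stacked constraint so that the $u$- and $y$-stationarity conditions pick out $\lambda_u$ and $\lambda_y$ and not the initial block $\lambda_{ini}$; note that $u^{ini},y^{ini}$ are fixed measured data, not decision variables, so they generate no stationarity equation and $\lambda_{ini}$ is left to be pinned down by the remaining relations. Second, the factor of $2$ must be tracked through the symmetric quadratic forms $y^T\mathcal{Q}y$ and $u^T\mathcal{R}u$. Since $\mathbf{P}_1$ is convex, these necessary conditions are in fact also sufficient and hence fully characterize the optimal $(g^*,u^*,y^*)$; this convexity remark is not required for the lemma as stated, but it confirms that the obtained KKT system is the correct optimality object to carry into the subsequent inverse problem.
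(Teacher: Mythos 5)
Your proof is correct and is exactly the routine argument the paper relies on: the paper states this lemma without an explicit proof, since it is the standard first-order (KKT) system for a convex QP with affine equality constraints, obtained by differentiating the given Lagrangian block by block. Your additional observations — that linearity of the constraint makes the KKT conditions necessary without further constraint qualification, and that convexity makes them sufficient — are accurate and consistent with how the paper uses the lemma.
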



\noindent Denote $w_p = \col(U_p, Y_p)$. From Lemma \ref{kkt_lem}, we derive
\begin{equation}\nonumber
\begin{aligned}
& H_{T_{\textup{ini}}+N}^T(w^d) \lambda^* = (
{U_p^T ~~ Y_p^T} ~~ U_f^T ~~ Y_f^T) \col(
\lambda^*_{\textup{ini}}, \lambda^*_u, \lambda^*_y)\\
& = w_p^T \lambda^*_{\textup{ini}} + U_f^T (2\mathcal{R} u^*) + Y_f^T (2\mathcal{Q} y^*)=0.
\end{aligned}
\end{equation}
According to Proposition \ref{rem-1}, the optimal trajectory $\Tilde{w}$ split from $w^o$ should also satisfy Lemma \ref{kkt_lem}, which is
\begin{equation}\label{kkt_cons}
{w_p^T \lambda_{\textup{ini}} + 2U_f^T \mathcal{R} \Tilde{u} + 2Y_f^T \mathcal{Q} \Tilde{y}=0.}
\end{equation}
{We drop the superscript $*$ here since they are collected data rather than a generic optimizer variable in Lemma 4. The multiplier $\lambda_{\textup{ini}}$ denotes the associated costate consistent with this recorded optimal trajectory.}

\subsection{Vanilla 3DIOC Algorithm}

{
Equation \eqref{kkt_cons} presents an equality relation between the real weighting matrices $Q,R$ along with the optimal inputs $\Tilde{u}$, outputs $\Tilde{y}$ (split from $w^o$). Parameters $w_p, U_f, Y_f$ are calculated from $w^d$.
Therefore, we are able to identify $Q,R$ with equation \eqref{kkt_cons} utilizing collected data $w^o, w^d$ directly.} However, observing \eqref{kkt_cons}, we find the identification of $Q,R$ possesses a scalar ambiguity property and what we can identify is a scaling invariant set.
{\begin{remark}[Scalar Ambiguity]
Given the collected data $w^d$ and optimal trajectory $w^o$, suppose the real weighting matrices $(Q,R)$ satisfy the equation \eqref{kkt_cons} with a costate $\lambda_{\textup{ini}}$, then $(\alpha Q, \alpha R)$ with $\alpha \lambda_{\textup{ini}}$ also satisfy the equation, where $\alpha \in \mathbb{R}_+$ is an arbitrary scalar.
\end{remark}}
\begin{definition}[Scaling Invariant Set]
Considering the IOC problem described in Problem \ref{pro-desc}, we define the parameter set $\Theta= \{(\alpha Q, \alpha R), \alpha \in \mathbb{R}_+ \}$ as its adjoint scaling invariant set, where $Q,R$ are true value.
\end{definition}
\noindent {The inverse problem can be ill-posed. For Problem \ref{pro-desc}, there may exist multiple linearly independent matrix sets of $(Q,R)$ that can generate the same optimal trajectory $w^o$. The following condition ensures identifiability of the scaling invariant set.}

\begin{proposition}\label{iden_th}
Suppose there are two sets of weighting matrix $(Q,R)$ and $(Q',R')$ both satisfying equation \eqref{kkt_cons} with respect to collected data $w^d, w^o$ and equation \eqref{as-data} is fulfilled. We have $Q= \alpha Q', R= \alpha R'$ for a scalar $\alpha \in \mathbb{R}_+$, if
\begin{equation}
\mathrm{dim}(\mathbb{N}(\Tilde{\Psi})) =1,
\end{equation}
{The matrix $\Tilde{\Psi}$ is defined as 
\begin{equation}
\Tilde{\Psi} = \begin{pmatrix}
w_p^T & \Tilde{U}_f & \Tilde{Y}_f\end{pmatrix} \cdot \diag(I,D_m,D_p),
\end{equation}
where $\Tilde{U}_f = \sum_{i=0}^{N-1} \Tilde{u}_i^T \otimes 2U_f^T(im+1:(i+1)m,:)$, $\Tilde{Y}_f =  \sum_{i=0}^{N-1} \Tilde{y}_i^T \otimes 2Y_f^T(ip+1:(i+1)p,:)$ and $D_m, D_p$ are duplication matrices satisfying
\begin{equation}\label{eq:dpdm}
D_m \vech(R) = \vecc(R), D_p \vech(Q) = \vecc(Q).
\end{equation}
Appendix \ref{th1-prf} provides the explicit calculation form of $\Tilde{\Psi}$.}
\end{proposition}
\begin{pf}
    See the proof in Appendix \ref{th1-prf}.
\end{pf}

\noindent The above condition is easy to check in practice by calculating the rank, which should satisfy
\begin{equation}\label{id-cons}
\rank(\Tilde{\Psi}) = (m+p)T_{\textup{ini}}+\frac{1}{2}(m^2+m+p^2+p)-1.
\end{equation}

With the identifiability guarantee, we can obtain a solution included in the scaling invariant set of real weighting matrices by utilizing equation \eqref{kkt_cons} as a constraint. Given $w^d,w^o$, we provide the following data-driven IOC problem:
\begin{problem}\label{deioc}
\begin{equation}
\begin{aligned}
\min_{\lambda_{\textup{ini}}, Q, R} ~ \kappa^2 ~~~~s.t. ~ \eqref{kkt_cons}, I \preceq \mathrm{diag}(Q,R) \preceq \kappa I.
\end{aligned}
\end{equation}
\end{problem}

\noindent Due to the scalar ambiguity, we minimize the condition number of the matrix $\diag(Q,R)$ for a unique solution. Thus, if the collected data is accurate, Problem \ref{deioc} is a feasible LMI problem. Since it is convex with linear constraints, there
exists a unique optimal estimation $\hat{Q}, \hat{R}$.

\subsection{Simplified 3DIOC without Redundant Variables}\label{subsec-sim-kkt}

Notice that the unknown variables in problem $\mathbf{P}_1$ are actually redundant. We do not care the estimation of the Lagrangian multiplier $\lambda_{\textup{ini}}$ and the number of variables in it will increase when $T_{\textup{ini}}$ goes larger, leading to an unnecessary cost in computation. Therefore, in this subsection, we present a simplified KKT-based 3DIOC by removing the redundant variables. 

Substitute optimization variables $y,g$ with $u$ in problem $\mathbf{P}_1$ by converting the constraint into
\begin{equation}\label{y-pseudo}
y = Y_f \begin{pmatrix}
U_p \\ Y_p \\ U_f
\end{pmatrix}^{\dagger} \begin{pmatrix}
u^{\textup{ini}} \\ y^{\textup{ini}} \\ u 
\end{pmatrix} \!=\! K\begin{pmatrix}
u^{\textup{ini}} \\ y^{\textup{ini}} \\ u 
\end{pmatrix}\!=\!
(K_p ~|~ K_f)\begin{pmatrix}
z_{\textup{ini}} \\ u
\end{pmatrix}
\end{equation}
in which case the objective function becomes
\begin{equation}\nonumber
\begin{aligned}
&J(u) =: \begin{pmatrix}
z_{\textup{ini}}^T & u^T
\end{pmatrix} \begin{pmatrix}
K_p^T \\ K_f^T
\end{pmatrix} \mathcal{Q} \begin{pmatrix}
K_p & K_f
\end{pmatrix} \begin{pmatrix}
z_{\textup{ini}} \\ u
\end{pmatrix} + u^T \mathcal{R} u\\
& = z_{\textup{ini}}^T K_p^T \mathcal{Q} K_p z_{\textup{ini}} \!+\! 2z_{\textup{ini}}^T K_p^T \mathcal{Q} K_f u \!+\! u^T (K_f^T \mathcal{Q} K_f \!+\! \mathcal{R}) u,
\end{aligned}
\end{equation}
and problem $\mathbf{P}_1$ turns into an unconstrained problem
\[
\mathbf{P}_2: ~\min_{u} ~~J(u).
\]
\begin{remark}
{
Forward problems $\mathbf{P}_1$ and $\mathbf{P}_2$ are equal with exact data, while when the data is inexact, substitution \eqref{y-pseudo} reflects a regularization $\Vert g \Vert_2$ in $\mathbf{P}_2$, leading to a more robust solution.}
\end{remark}
Therefore, the optimality necessary condition is that the derivative of $J(u)$ with respect to $u$ equals zero. We have
\begin{equation}\nonumber
\frac{\partial J(u)}{\partial u} = 2  (K_f^T \mathcal{Q} K_f+\mathcal{R}) u + 2 K_f^T \mathcal{Q}^T K_p z_{\textup{ini}}.
\end{equation}
Based on this new formulation, we present the simplified model-free KKT condition.
\begin{lemma}[Simplified Model-free KKT]\label{skkt}
Given the collected data $w^d, w^{\textup{ini}}$, the optimal input solution $u^*$ corresponding to $\mathbf{P}_2$ satisfies
\begin{equation}\label{skkt-cons}
(K_f^T \mathcal{Q} K_f+\mathcal{R}) u^* + K_f^T \mathcal{Q}^T K_p z_{\textup{ini}} = 0.
\end{equation}
\end{lemma}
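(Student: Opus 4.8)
The plan is to treat $\mathbf{P}_2$ as exactly what it is: an unconstrained, strictly convex quadratic program in the single variable $u$, so that its minimizer is characterized completely by the vanishing of the gradient. The asserted equation is then nothing more than the stationarity condition $\partial J(u)/\partial u = 0$ after cancelling the common factor $2$, and the bulk of the work is simply justifying that the reduction from $\mathbf{P}_1$ to $\mathbf{P}_2$ is lossless so that this first-order condition genuinely characterizes the optimal input.

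First I would confirm that the elimination through \eqref{y-pseudo} faithfully reproduces the feasible set of $\mathbf{P}_1$. The subtlety is that, for a fixed pair $(z_{ini},u)$, the reduced linear system $\col(U_p,Y_p,U_f)\,g = \col(u^{ini},y^{ini},u)$ may admit many solutions $g$, yet the induced output $y = Y_f g$ must coincide for all of them. This is precisely the uniqueness-of-output guarantee (the lemma from \cite{markovsky2008data}) valid under $T_{ini}\geqslant l$, together with the generalized PE/rank condition \eqref{as-data} that makes $H_{T_{ini}+N}(w^d)$ a legitimate input-output representation. Hence $y = K_p z_{ini} + K_f u$ is a well-defined affine function of $u$ (with $z_{ini}$ fixed), the pseudo-inverse merely selecting one admissible $g$, and substituting into the objective of $\mathbf{P}_1$ yields exactly $J(u)$. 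Because the constraint has been absorbed, $u^*$ solves $\mathbf{P}_1$ (and therefore its $N$-length trajectory matches $\Tilde{w}$ by Proposition \ref{rem-1}) if and only if $u^*$ minimizes $J$.

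Next I would record that $J$ is strictly convex. Its Hessian is $2(K_f^T \mathcal{Q} K_f + \mathcal{R})$; since $R\succ 0$ gives $\mathcal{R}\succ 0$ while $K_f^T \mathcal{Q} K_f \succeq 0$ follows from $Q\succeq 0$, the Hessian is positive definite. Thus $\mathbf{P}_2$ possesses a unique global minimizer, and for a differentiable unconstrained convex problem a point is that minimizer if and only if its gradient vanishes. Evaluating the gradient (already displayed preceding the lemma) gives $\partial J(u)/\partial u = 2(K_f^T \mathcal{Q} K_f + \mathcal{R})u + 2 K_f^T \mathcal{Q}^T K_p z_{ini}$; setting this to zero at $u=u^*$ and dividing by $2$ produces the claimed identity \eqref{skkt-cons}, where one may additionally note $\mathcal{Q}^T = \mathcal{Q}$ by symmetry of $Q$.

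I expect the only non-mechanical step to be the well-definedness of the map $K$, i.e.\ verifying that the pseudo-inverse substitution recovers the entire feasible set of $\mathbf{P}_1$ rather than a strict subset of it. Everything downstream — strict convexity, uniqueness of the minimizer, and the reduction of optimality to a single gradient equation — is routine once that reduction is secured, and the gradient computation itself is elementary matrix calculus on a quadratic form.
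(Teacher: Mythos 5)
Your proposal is correct and follows essentially the same route as the paper: the paper also obtains \eqref{skkt-cons} by substituting the constraint via \eqref{y-pseudo} to form the unconstrained quadratic $J(u)$ and setting $\partial J(u)/\partial u = 2(K_f^T \mathcal{Q} K_f+\mathcal{R})u + 2K_f^T \mathcal{Q}^T K_p z_{ini}$ to zero. Your two additions --- justifying that $Y_f g$ is independent of the particular solution $g$ (via the uniqueness-of-output lemma with $T_{ini}\geqslant l$ and the PE condition \eqref{as-data}), and noting positive definiteness of the Hessian so that stationarity is also sufficient --- are sound refinements of details the paper leaves implicit, not a different argument.
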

Similarly, the optimal trajectory $\Tilde{w}$ split from $w^o$ also satisfy equation \eqref{skkt-cons}, which is
\begin{equation}\label{skkt-eq}
(K_f^T \mathcal{Q} K_f+\mathcal{R}) \Tilde{u} + K_f^T \mathcal{Q}^T K_p z_{\textup{ini}} = 0.
\end{equation}
With equation \eqref{skkt-eq}, we provide a new identifiability condition for $Q,R$ as follows.
\begin{theorem}[Identifiability]\label{s-iden_th}
Suppose there are two weighting matrix sets $(Q,R)$ and $(Q',R')$ both satisfy equation \eqref{skkt-eq} with respect to collected data $w^d, w^o$ ensuring equation \eqref{as-data}. We have $Q= \alpha Q', R= \alpha R'$ for a scalar $\alpha \in \mathbb{R}_+$, if
\begin{equation}
\mathrm{dim}(\mathbb{N}(\Tilde{\Phi}))=1.
\end{equation}
{The matrix $\Tilde{\Phi}$ is defined as
\begin{equation}\label{eq:tilde-phi}
\Tilde{\Phi} = \begin{pmatrix}
\Phi_y & \Phi_u
\end{pmatrix} \diag(D_p, D_m)
\end{equation}
where $\Phi_y = \sum_{i=0}^{N-1} (\Tilde{y}^{es}_i)^T \otimes K_f^T(ip+1:(i+1)p,:)$ and $\Phi_u = \sum_{i=0}^{N-1} (\Tilde{u}_i)^T \otimes I_{mN}(im+1:(i+1)m,:)$ and $\Tilde{y}^{es} = K_f \Tilde{u} + K_p z_{\textup{ini}}$. $D_p, D_m$ are defined as \eqref{eq:dpdm}.
Appendix \ref{thm2-prf} provides the full explicit calculation form of $\Tilde{\Phi}$.}
\end{theorem}
\begin{pf}
See the proof in Appendix \ref{thm2-prf}.
\end{pf}
Similarly, the identifiability condition \eqref{s-id-cons} is easy to check given the explicit form of $\Tilde{\Phi}$, whose rank should satisfy:
\begin{equation}\label{s-id-cons}
\rank(\Tilde{\Phi}) = \frac{1}{2}(m^2+m+p^2+p)-1.
\end{equation}

Note that matrix $\Tilde{\Phi}$ is of $mN \times \frac{1}{2}(m^2+m+p^2+p)$. {We can further obtain an unidentifiable condition for our inverse problem (Problem \ref{pro-desc}) as follows. Corollary \ref{coro} provides a method to protect the system's intention by designing the horizon length.}
\begin{corollary}[Intention Protection]\label{coro}
The real weighting matrices $Q,R$ (its scaling invariant set) cannot be identified if the horizon $N = N'-T_{\textup{ini}}$ is set to be
\[
N < \frac{1}{2m}(m^2+m+p^2+p)-\frac{1}{m}.
\]
\end{corollary}
\begin{pf}
If $\rank(\Tilde{\Phi}) =mN < \frac{1}{2}(m^2+m+p^2+p)-1$, then there exist multiple linearly independent solutions for $Q,R$ satisfying equation \eqref{skkt-eq}.
\end{pf}

\begin{remark}
Corollary \ref{coro} reveals that the identifiability is related to the richness of collected data. Actually, based on Theorem \ref{s-iden_th}, the unidentifiability condition is described as $\mathrm{dim}(\mathbb{N}(\Tilde{\Phi})) > 1$. In that case, we can still obtain an estimation, which generates the same trajectory under given $N$ with real $Q,R$, while when $N$ becomes larger, the difference in optimal trajectories will appear.
\end{remark}

Holding the simplified KKT condition \eqref{skkt-eq} as constraint, 3DIOC problem is now formulated as
\begin{problem}[KKT-based 3DIOC]\label{sim_3Dioc}
\begin{equation}
\begin{aligned}
\min_{Q, R} ~ \kappa^2 ~~~~ ~s.t.~ \eqref{skkt-eq}, I \preceq \mathrm{diag}(Q,R) \preceq \kappa I.
\end{aligned}
\end{equation}
\end{problem}

In Problem \ref{sim_3Dioc}, we only need the optimal input $\Tilde{u}$ instead both $\Tilde{y},\Tilde{u}$ compared with the former formulations in Problem \ref{deioc}. Besides, since the number of optimization variables is reduced by $(m+p)T_{\textup{ini}}$, the solution is more efficient and robust. See the KKT-based IOC algorithm in Algorithm \ref{kkt-ioc-alg}.

\begin{algorithm}
\caption{KKT-based 3DIOC Algorithm}
\begin{algorithmic}[1]\label{kkt-ioc-alg} 
\REQUIRE 
    One input-output trajectory $w^d$ satisfying PE condition \eqref{wd_PE}; 
    One optimal input-output trajectory $w^o$; 
    The initial trajectory length, $T_{\textup{ini}}$;
\ENSURE Estimated weighting matrices $\hat{Q},\hat{R}$;
\STATE Calculate the Hankel matrix and the corresponding $U_p,Y_p,U_f,Y_f$ with $w^d$;
\STATE Split $w^o$ by $T_{\textup{ini}}$ to get $w^{\textup{ini}}$ and $\Tilde{w}$ as \eqref{wo_split};
\STATE Calculate $K_p,K_f$ with \eqref{y-pseudo};
\STATE Solve Problem \ref{sim_3Dioc} with $K_p,K_f,\Tilde{u},z_{\textup{ini}}$;
\RETURN The optimal estimation $\hat{Q},\hat{R}$.
\end{algorithmic}
\end{algorithm}

\noindent Note that Problem \ref{sim_3Dioc} has a unique optimal solution when the identifiability condition \eqref{s-id-cons} is satisfied. 
{However, in practice, the data collected is usually disturbed by observation noises. Perturbed data as $\Tilde{u}^p = \Tilde{u} + \Delta \Tilde{u}, \Tilde{y}^p = \Tilde{y} +\Delta \Tilde{y}$ leads to the full rank of $\Tilde{\Phi}$, in which case equation \eqref{skkt-eq} only has a trivial zero solution. To deal with this infeasible case, we construct the following Problem \ref{inf_qr_pro}.
{\begin{problem}[Noise-corrupted Case]\label{inf_qr_pro}
\begin{equation}\label{inf_qr}
\begin{aligned}
& \min_{Q, R} ~\Vert (K_f^T \mathcal{Q} K_f+\mathcal{R}) \Tilde{u}^p + K_f^T \mathcal{Q}^T K_p z_{\textup{ini}} \Vert_2^2 \\
& ~~~s.t.~~~ Q \succeq \varepsilon I,~ R \succeq \varepsilon I,~ \mathrm{tr}(Q)+\mathrm{tr}(R)=1,
\end{aligned}
\end{equation}
\end{problem}}
}

{
Problem \ref{inf_qr_pro} is a convex optimization problem (a convex quadratic objective over a spectrahedron), hence it admits a global optimum and can be solved reliably by standard SDP solvers.
To retain analytical insight on \emph{noise sensitivity}, we additionally consider a relaxed \emph{spectral estimator} obtained by dropping the positive-definiteness and trace-normalization constraints, which yields a Rayleigh-quotient problem. The following corollary characterizes the sensitivity of this spectral estimator and shows its dependence on the spectral gap of $\Tilde{\Phi}^p$.
\begin{corollary}[Sensitivity Analysis]\label{per-ana}
Consider the relaxed estimator
\[
\rho(\Tilde{\Phi}^p) := \arg \min_{\theta} \Vert \Tilde{\Phi}^p \cdot \theta \Vert_2^2 ~~~ \text{s.t.} ~ \Vert \theta \Vert_2^2 =1,
\]
{where $\theta \in \mathbb{R}^{\frac{m+m^2+p+p^2}{2}}$ represents $\col(\vech(Q),\vech(R))$. $\Tilde{\Phi}^p$ is defined in the same form as $\Tilde{\Phi}$ in \eqref{eq:tilde-phi} by substituting $\Tilde{u}$ with $\Tilde{u}^p$.}
If there exists a perturbation $\Delta$ on the coefficient matrix $\Tilde{\Phi}^p$, the solution is bounded as
\begin{equation}
\Vert \rho(\Tilde{\Phi}^p) - \rho(\Tilde{\Phi}^p+\Delta) \Vert \leq \frac{\Vert \Delta^T \Tilde{\Phi}^p+ (\Tilde{\Phi}^p)^T \Delta \Vert}{|\sigma_1- \sigma_2|},
\end{equation}
where $\sigma_1, \sigma_2$ are the smallest and second smallest \emph{nonzero} singular values of $\Tilde{\Phi}^p$, respectively.
\end{corollary}}
\begin{pf}
See the proof in Appendix \ref{thm3-prf}.
\end{pf}

As a conclusion of KKT-based 3DIOC, Fig. \ref{flow} illustrates the connection between above proposed problems. In Section \ref{sec-blo-ioc}, we will further explore the inverse problem in a bi-level optimization formulation. 
\begin{figure}[htb]
\centering
\includegraphics[width=0.47\textwidth]{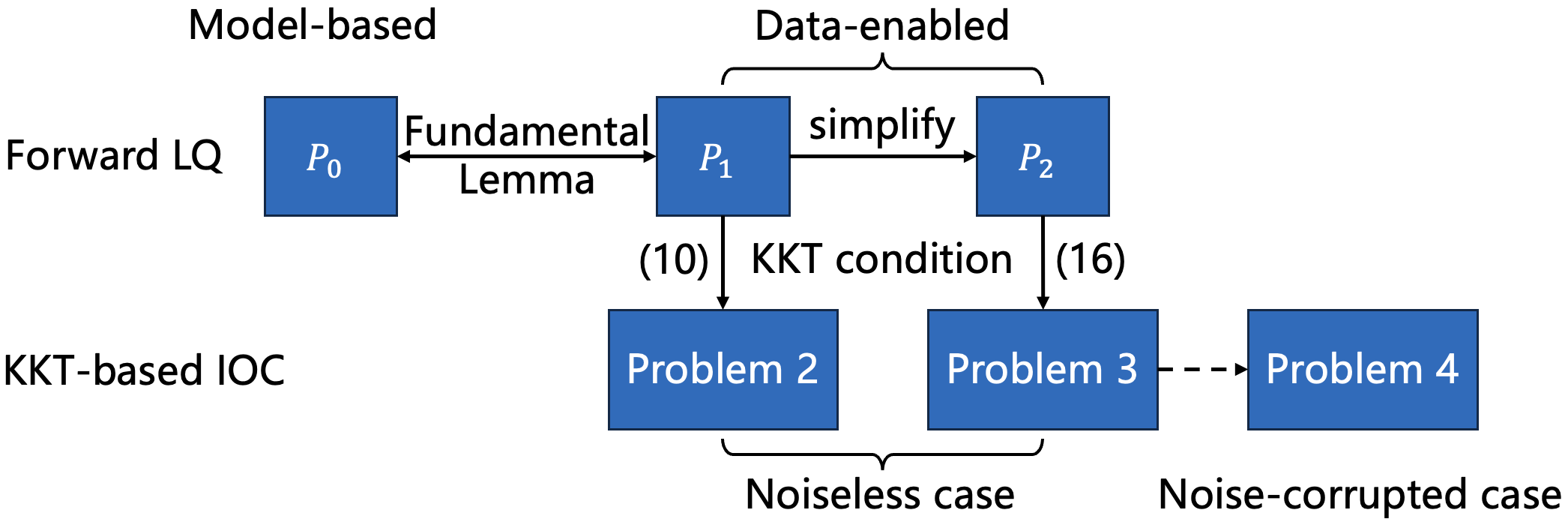}
\caption{{The flow chart of KKT-based 3DIOC problems.}}
\label{flow}
\end{figure}

\subsection{Special Case for LQR-IOC}
This subsection shows $\mathbf{P}_0$ is a general formulation and LQR-IOC is a special case of ours. If we set $C=I, D=0$ in the LTI system $\mathscr{B}$, we can build a discrete-time finite-horizon LQR control problem described as
\begin{equation}
\begin{aligned}
& \min_{u_{0:N'-1}, y_{0:N'}} ~ \sum_{k=0}^{N'-1} (y_k^T Q y_k + u_k^T R u_k) + y_{N'}^T H y_{N'}\\
& ~~~~~ s.t. ~~~~~~ x_{k+1}=Ax_k+Bu_k, y_k=x_k,  x_0 = \bar{x}.
\end{aligned}
\end{equation}
For this case, we collect an output trajectory $y^d$ of length $T+1$ generated by the $T$-length input signal $u^d$. Then we have
\[
\begin{pmatrix}
Y_p \\ \bar{Y}_f
\end{pmatrix} := \mathcal{H}_{T_{\textup{ini}}+1+N} (y^d), \bar{y}=\begin{pmatrix}
y \\ y_N
\end{pmatrix}.
\]
Substitute the $Y_f, y$ in problem $\mathbf{P}_1$ with $\bar{Y}_f, \bar{y}$ and the weighting matrix $\mathcal{Q}$ with $\diag(\mathcal{Q},H)$. The remaining analysis is similar to the previous subsection.

\section{3DIOC in Bi-level Formulation}\label{sec-blo-ioc}
In the previous section, we formulated the IOC problem based on KKT conditions and obtained the estimation by solving a single-level SDP or QP problem. While the KKT-based IOC approach is well-suited for the noiseless case, we encounter difficulties when analyzing the noise-corrupted case. In this section, we propose a bi-level optimization formulation for IOC as a better option for dealing with noises and we further reveal its connection to the previous algorithms.

\subsection{Bi-level Optimization Algorithm}
Consider the following bi-level optimization problem.
\begin{problem}[Bi-level 3DIOC]\label{blo-ioc}
\begin{equation}\label{blo-ioc-eq}
\begin{aligned}
& \min_{\theta \in \mathcal{D}} ~ \mathcal{S}(\theta): l(\theta; u^*_{\theta})=\Vert \Tilde{u} - u^*_{\theta} \Vert_2^2 \\
& ~s.t. ~~ u^*_{\theta} = \arg \min_{u} J(u;\theta),
\end{aligned}
\end{equation}
where the optimization variable set
\[
\mathcal{D} = \{\theta : \theta = \col(\vech(Q), \vech(R)), \underline{\varepsilon}I \preceq Q,R \preceq \bar{\varepsilon}I\}
\]
with constants 
$0<\underline{\varepsilon}<\bar{\varepsilon}< \infty$.
\end{problem}
\noindent We optimize $\theta = \col(\vech(Q), \vech(R))$ in $\mathcal{D}$, where $Q,R$ are constrained to facilitate the subsequent convergence analysis. The upper level objective is to minimize a loss function $l(\cdot)$ measuring the distance between the expert's behavior $\Tilde{u}$ and the learned behavior $u_{\theta}^*$ under a weighting parameter $\theta$. Here we specify $l(\cdot)$ as the L2-norm. The lower level problem is the unconstrained forward LQ control $\mathbf{P}_2$ with a fixed $\theta$. This BLO-IOC formulation is quite intuitive, where the overall objective is to obtain an estimation  $\hat{\theta}$ such that the trajectory it generates is closest to the expert's. This aligns the goal of imitation learning.

To solve Problem \ref{blo-ioc}, we have the following iterative gradient descent algorithm.

\begin{algorithm}[h]
\caption{BLO-3DIOC Algorithm}
\begin{algorithmic}[1]\label{blo-alg}
\REQUIRE 
    Collected input-output trajectories, $w^d,w^o$; 
    The learning rates of outer and inner problems, $\alpha, \beta$; 
    Initial guess to weighting parameter, $\theta_0$; Initial input, $u_{0,0}$; 
    The tolerance sequence, $\{\epsilon_k\}_k$; The constraint set, $\mathcal{D}$;
\ENSURE 
    The estimated weighting parameter $\hat{\theta}$ with
its corresponding control input;
\STATE Calculate $K_p,K_f,\Tilde{u},z_{\textup{ini}}$;
\FOR{Iteration $k=0$ to $\mathtt{max\_iter}$}
    \STATE Solve the inner optimization problem:
    \STATE Let $i = 0$
    \WHILE{$\Vert \nabla_u J(u_{i,k};\theta_k) \Vert \geq \epsilon_k$}
        \STATE $u_{i+1,k} = u_{i,k} - \beta \nabla_u J(u_{i,k};\theta_k)$
        \STATE $i = i+1$
    \ENDWHILE
    \STATE We have the approximate solution to the inner
problem $\bar{u}_{\theta_k} = u_{i,k}$;
\STATE Reset the initial $u_{0,k+1} = u_{i,k}$;
    \STATE Update the outer optimization problem:
    \STATE $\theta_{{new}} = \theta_k - \alpha (\Tilde{u} - \bar{u}_{\theta_k}) \cdot$ \\
$~~~~~~~~~~~~~~[\nabla_{uu}J(\bar{u}_{\theta_k};\theta_k)]^{-1}\nabla_{u\theta}J(\bar{u}_{\theta_k};\theta_k)$
    \STATE Projection on the PSD cone: $\theta_{k+1} \gets \mathcal{P}_{\mathcal{D}}(\theta_{{new}})$
\ENDFOR
\RETURN $\hat{\theta} = \theta_{\mathtt{max\_iter}}$
\end{algorithmic}
\end{algorithm}

\begin{remark}
The lower level optimization problems in Problem \ref{blo-ioc} is strongly convex with respect to $u$ and the Hessian matrix $\nabla_{u u} J$ is invertible.
During the gradient descent, the upper level optimization update may cause the optimization variable $\theta_{new} \not\in \mathcal{D}$. Thus we need to reconstruct $Q,R$ from $\theta_{new}$ and project them onto the bounded positive semi-definite (PSD) cone $\{\underline{\varepsilon}I \preceq Q,R \preceq \bar{\varepsilon}I\}$ to obtain $\theta_{k+1}$. Since a bounded PSD cone is convex, $\mathcal{D}$ is also a convex set. Projections onto convex sets are firmly nonexpansive operators \cite{parikh2014proximal}.
\end{remark}

\begin{theorem}[Convergence]\label{blo-conv}
If the tolerance sequence is summable, that is $\epsilon_k$ is positive and satisfies $\sum_{k=0}^{\infty} \epsilon_k < \infty$, then we have
\begin{equation}
\lim_{k \rightarrow \infty} \Vert \nabla \mathcal{S}(\theta_k) \Vert = 0.
\end{equation}
Then Algorithm \ref{blo-alg} converges to a stationary point.
\end{theorem}
\begin{pf}
See the proof in Appendix \ref{th-3-pf}.
\end{pf}

Now we consider the noisy case. We examine the asymptotic property of the estimator with respect to the amount of collected data. {We use the term \emph{risk} in the standard statistical learning sense, namely the expected prediction error under the data-generating distribution. Assume $\mathcal{T}$ i.i.d. optimal trajectories $\{\Tilde{u}^p(1), \dots, \Tilde{u}^p(\mathcal{T})\}$ perturbed by observation noises are distributed from a fixed distribution. By minimizing the empirical risk, the noise-corrupted BLO-IOC problem is built as}
\begin{equation}\label{blo-ioc-noisy}
\begin{aligned}
& \min_{\theta \in \mathcal{D}} ~ \mathcal{S}_\mathcal{T}(\theta): l(\theta; u^*_{\theta})=\frac{1}{\mathcal{T}} \sum_{i=1}^\mathcal{T} \Vert \Tilde{u}^p(i) - u^*_{\theta} \Vert_2^2 \\
& ~s.t. ~~ u^*_{\theta} = \arg \min_{u} J(u;\theta).
\end{aligned}
\end{equation}
Denote $\hat{\theta}_\mathcal{T}$ as the solution to Problem \eqref{blo-ioc-noisy}. We will show that when the data amount goes to infinity, $\hat{\theta}_\mathcal{T}$ asymptotically provides the best predictions possible. 

\begin{corollary}[Risk Consistency]\label{risk-consis}
For all $i=1,\dots,\mathcal{T}$, assume $\mathbb{E}(\Vert \Tilde{u}^p(i) \Vert^2) < +\infty$. The estimate $\hat{\theta}_\mathcal{T}$ is risk consistent, meaning as $\mathcal{T} \rightarrow \infty$, we have
\begin{equation}
\mathcal{S}(\hat{\theta}_\mathcal{T}) \xrightarrow{P} \min \{ \mathcal{S}(\theta)| \theta \in \mathcal{D}\}.
\end{equation}
With a slight abuse of notation, here we denote $\mathcal{S}(\theta) = \mathbb{E}(\Vert \Tilde{u}^p - u^*_{\theta} \Vert_2^2)$ with $u^*_{\theta} = \arg \min_{u} J(u;\theta)$.
\end{corollary}

\begin{pf}
See the proof in Appendix \ref{co-2-pf}.
\end{pf}

\subsection{Numerical Approach to Achieve the Global Optimum}

Due to the possible non-convexity of $\mathcal{S}(\theta)$ with respect to $\theta$, Algorithm \ref{blo-alg} is a first-order gradient descent framework for bi-level problem that only achieves the local optimum. The estimation performance is highly related to the option of the initial guess $\theta_0$. Inspired by \cite{aswani2018inverse}, we introduce an enumeration algorithm to obtain the global optimal solution based on discretizing $\mathcal{D}$ with a $\delta-$net. The $\delta-$net $\mathcal{T}(\delta)$ is a finite set such that
\[
\max_{\theta \in \mathcal{D}} \min_{t \in \mathcal{T}(\delta)} \Vert t-\theta \Vert \leq \delta.
\]
Then we only need to calculate $\mathcal{S}(\theta)$ over different fixed values of $\theta$ in $\mathcal{T}(\delta)$. This translates the continuous optimization into an enumerative grid search.

\begin{algorithm}[htb]
\caption{Enumeration BLO-3DIOC}
\begin{algorithmic}[1]\label{enum-alg}
\REQUIRE 
    A fixed interval $\delta > 0$; 
    Collected input-output trajectories $w^d, w^o$;
\ENSURE Estimated weighting parameter $\hat{\theta}$;
\STATE Construct the $\delta$-net $\mathcal{T}(\delta) \text{ of } \mathcal{D}$
\FOR{each $\theta \in \mathcal{T}(\delta)$}
    \STATE Compute $\mathcal{S}(\theta)$ by solving Problem~\ref{blo-ioc} with fixed $\theta$
\ENDFOR
\STATE Select optimal: $\hat{\theta} = \arg\min\, \{\mathcal{S}(\theta) \mid \theta \in \mathcal{T}(\delta)\}$
\end{algorithmic}
\end{algorithm}
See the detailed method in Algorithm \ref{enum-alg}.
The enumeration algorithm is applicable to relatively modest $m,p$.

\subsection{Discussion}\label{subsec-dis}
This subsection aims to discuss the relationship between the KKT-based IOC in Section \ref{sec-kkt-ioc} and BLO-IOC in Section \ref{sec-blo-ioc}, offering insights on choosing the proper algorithm in practice. 
We evaluate this in two situations. 

\textbf{Case 1: The problem is noiseless and identifiable.} Given the observed input-output trajectories $w^d, w^o$, we already know the noiseless KKT-based IOC returns the scaling invariant set of the true matrices $Q,R$. By solving Problem \ref{sim_3Dioc}, we obtain the optimal estimation that satisfies equation~\eqref{skkt-eq}.
For Problem \ref{blo-ioc}, in this case the optimal objective function value should be $0$, thus the problem transforms into finding $\theta$ such that $u^*_{\theta}=\Tilde{u}$, which exactly equals Problem \ref{sim_3Dioc}.

However, though the two formulations are equivalent, the gradient descent method such as Algorithm \ref{blo-alg} can only achieve the local minimum of Problem \ref{blo-ioc}.
Lemma \ref{lem-blo-opt} gives the second-order optimality condition of Problem \ref{blo-ioc}, describing its local minimum set.
\begin{lemma}\label{lem-blo-opt}
Assume the gradient descent method to Problem \ref{blo-ioc} converges.
The obtained stationary point $\theta^*$ (also $Q^*,R^*$) satisfies:
\begin{equation}\label{blo-opt-con1}
\nabla \mathcal{S}(\theta^*) =(\Tilde{u} - u^*_{{\theta}^*})^T (K_f^T {\mathcal{Q}^*} K_f+{\mathcal{R}^*})^{-1} \Tilde{\Phi}(u^*_{{\theta}^*}) = 0,
\end{equation}
and 
\begin{equation}\label{blo-opt-con2}
\nabla^2 \mathcal{S}(\theta^*) \succ 0,
\end{equation}
where $\Tilde{\Phi}(u^*_{{\theta}^*})$ is defined by substituting $\Tilde{u}$ in the calculation of $\Tilde{\Phi}$ in Appendix \ref{thm2-prf} by $u^*_{{\theta}^*}$.
\end{lemma}
\begin{pf}
Gradient descent method almost surely converges to second‐order stationary points. This directly comes from the second-order necessary and sufficient condition. See the derivation of \eqref{blo-opt-con1} in Appendix \ref{lem-6-pf}.
\end{pf}
Holding equations \eqref{skkt-eq} and Lemma \ref{lem-blo-opt}, we observe the following inclusion relationship.
\begin{proposition}[Connection between Algorithms]\label{connect}
Assume algorithms achieve exact convergence. The solution to Problem \ref{sim_3Dioc} by Algorithm \ref{kkt-ioc-alg} is included in the solution set obtained by Algorithm \ref{blo-alg} with $\underline{\varepsilon} \leq 1$.
\end{proposition}
\begin{pf}
See the proof in Appendix \ref{pro-4-pf}.
\end{pf}
Proposition \ref{connect} is consistent with our intuition since the gradient descent algorithm finds all stationary points. If Problem \ref{blo-ioc} is strictly convex, two algorithms both return solutions included in the scaling invariant set $\Theta$ theoretically.
Also naturally, since the enumeration method described in Algorithm \ref{enum-alg} achieves the global optimum with a proper $\mathcal{T}(\delta)$, its solution is in $\Theta$.

\begin{figure}[htb]
    \centering
    \includegraphics[width=1\linewidth]{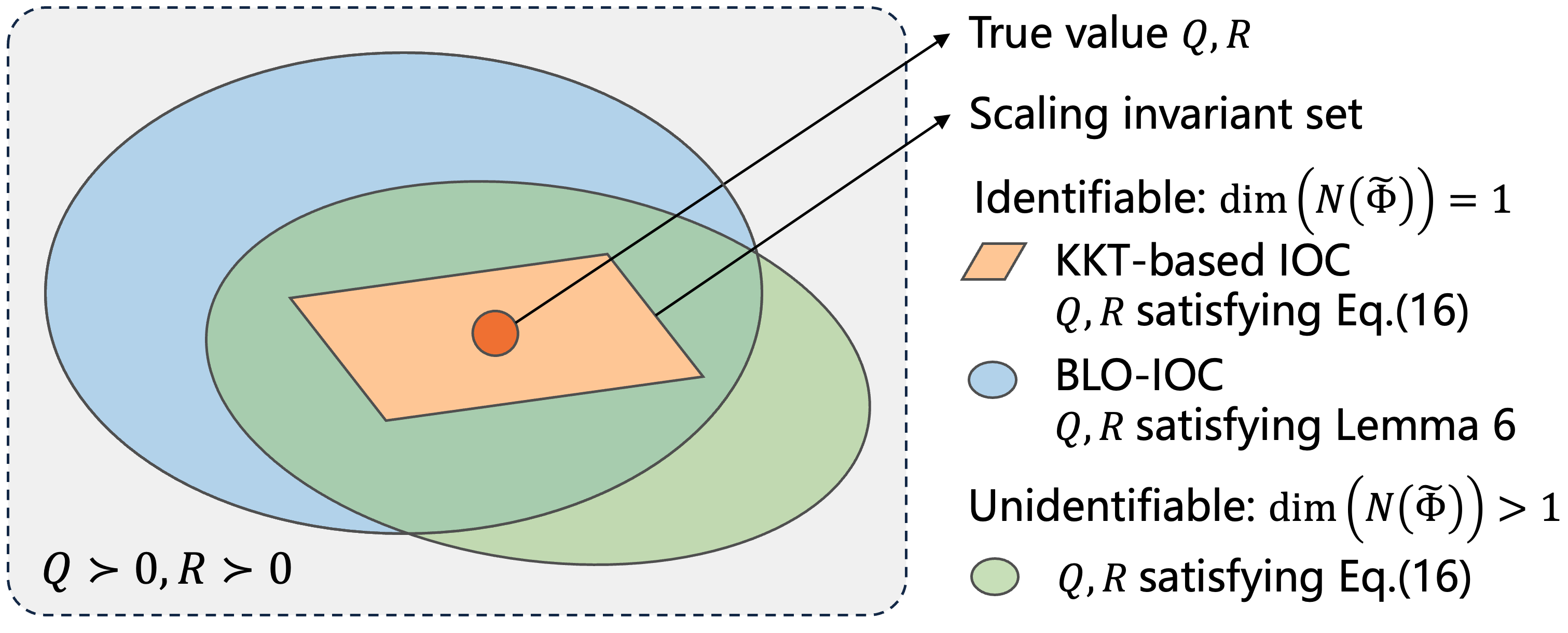}
    \caption{An illustration to the relationship between solution sets in noiseless case. Consider all the potential estimation candidates for $Q,R$, i.e., positive definite matrices, as the gray box. The red dot represents the true value.  
    Green circle shows all the estimates satisfying \eqref{skkt-eq}.
    These parameters generate the same $\Tilde{u}$ in the forward control given $N$. Specifically, when the identifiability condition holds, solution to KKT-based IOC is a scaling invariant set of $Q,R$, illustrated as the parallelogram. Solution set obtained by solving BLO-IOC through gradient descent method is the light blue circle. It returns the local optimum estimates over a risk minimization. }
    \label{fig:connect}
\end{figure}

\textbf{Case 2: The problem involves observation noises.}
When the collected data is disturbed by observation noises, the problem is almost surely unidentifiable. KKT-based IOC is formulated as Problem \ref{inf_qr_pro}. We try to explore the connection between the objective function in Problem \ref{inf_qr_pro} and Problem \ref{blo-ioc} (directly substituting $\Tilde{u}$ with perturbed data $\Tilde{u}^p$). Assuming the inner problem in Problem \ref{blo-ioc} is optimized to optimum, we rewrite $u^*_{\theta}$ as its explicit solution form. The deviation $\Vert \Tilde{u}^p - u^*_{\theta} \Vert_2^2$ now turns into
\begin{equation}\nonumber
\begin{aligned}
& \Vert \Tilde{u}^p +(K_f^T \mathcal{Q} K_f+\mathcal{R})^{-1} (K_f^T \mathcal{Q}^T K_p z_{\textup{ini}})  \Vert_2^2 =\\
&\Vert (K_f^T \mathcal{Q} K_f+\mathcal{R})^{-1} ((K_f^T \mathcal{Q} K_f+\mathcal{R}) \Tilde{u}^p + K_f^T \mathcal{Q}^T K_p z_{\textup{ini}})  \Vert_2^2 \\
\end{aligned}
\end{equation}
which weighs the residual
\[
\Vert (K_f^T \mathcal{Q} K_f+\mathcal{R}) \Tilde{u}^p + K_f^T \mathcal{Q}^T K_p z_{\textup{ini}} \Vert_2^2
\]
in Problem \ref{inf_qr_pro} by the inverse $(K_f^T \mathcal{Q} K_f+\mathcal{R})^{-1}$. Observing these two objectives in different formulations, they are generally not equivalent. 
Overall, Problem \ref{inf_qr_pro} is a non-convex optimization problem with an equality-sphere constraint. We cannot come up with solution guarantees when using a black-box solver in simulation. Problem \ref{blo-ioc} formulated through minimizing the discrepancy to expert’s behavior, by contrast, enjoys convergence guarantees to a local minimum. Moreover, if we turn to tackle the asymptotic problem \eqref{blo-ioc-noisy}, the estimator $\hat{\theta}_\mathcal{T}$ is proved to be risk consistent. 

Therefore, based on the foregoing discussion, we recommend using Algorithm \ref{kkt-ioc-alg} when the problem is noiseless, and adopting the bi-level formulation with Algorithm \ref{blo-alg} when observation noise is present, which is particularly suitable for robot's imitation tasks. 

{To further summarize the practical trade-offs among the three IOC solvers (KKT-based IOC, BLO-IOC, and the enumeration-based approach), we provide a concise comparison in Table~\ref{tab:compare-algs} under two regimes: the noise-free and noise-corrupted settings.
In the noise-free identifiable regime, the true weighting parameters are identifiable up to a scaling ambiguity, hence they form a scaling-invariant set $\Theta$.
The table highlights the set-theoretic connections among the solution sets returned by different algorithms as shown in Fig. \ref{fig:connect}: Algorithm~1 and 3 returns a solution set $\mathcal I_1$ that is contained in $\Theta$; in contrast, Algorithm~2 produces a broader set $\mathcal I_2$ with $\mathcal I_1\subseteq \mathcal I_2$ but without the guarantee $\mathcal I_2\subseteq\Theta$.
When the expert trajectory is corrupted by noise, we show that KKT-based IOC admits a sensitivity analysis, while BLO-IOC provides convergence and risk-consistency guarantees.
}



\section{Simulations}\label{sim}
\subsection{Data Preparation}
{Consider a time-invariant linear system \eqref{sys} with randomly generated matrices:
\begin{align*}
& A = \begin{pmatrix}
0.8147 &  0.1270  &  0.6324\\
0.9058  &  0.9134  &  0.0975\\
0.1270  &  0.6324  &  0.2785
\end{pmatrix}, B = \begin{pmatrix}
0.5469  &  0.9575\\
0.9157  &  0.9649\\
0.7577  &  0.1576
\end{pmatrix},\\
& C = \begin{pmatrix}
0.8003  &  0.4218  &  0.9157\\
0.1419  &  0.9157  &  0.7922\\
0.6557  &  0.7922  &  0.9595
\end{pmatrix}, D = \begin{pmatrix}
0.0357  &  0.8491\\
0.8491  &  0.9340\\
0.9340  &  0.6787
\end{pmatrix}.
\end{align*}
and $n=p=3,m=2$. Set the initial state as a random vector $\bar{x} \in \mathbb{R}^3$ and generate a sequence of inputs to collect input-output trajectories $w^d$ with the dynamic for a length $T=50$. Ensure the collected trajectory $w^d$ satisfies the PE condition. We build the forward LQ problem with weighting matrices
\[
Q = \begin{pmatrix}
1 & 0.2 & 0\\
0.2 & 1.5 & 0.2\\
0 & 0.2 & 0.8
\end{pmatrix}, R = \begin{pmatrix}
0.4 & 0.2\\
0.2 & 0.8
\end{pmatrix}
\]
and horizon $N'=11$.}
Suppose the system is driven by the optimal policy and we collect an optimal input-output trajectory $w^o$. {We construct the Hankel matrix $H_{T_{\textup{ini}}+N}(w^d)$ with the initial length $T_{\textup{ini}}$ from $1$ to $7$ (the left length $N$ from $10$ to $4$). The identifiability is checked through Theorem \ref{iden_th} and \ref{s-iden_th}. When $N < 4$, the inverse problem is unidentifiable according to Corollary \ref{coro}.} 

\subsection{Simulation for KKT-based IOC}
We firstly conduct the KKT-based IOC as Algorithm \ref{kkt-ioc-alg}. The algorithm is run on MacBook Air with Apple M1 eight-core CPU and 16 GB of RAM.
Solve the noiseless problem with semi-definite programming (SDP) solver SeDuMi 
and infeasible case with BMIBNB in MATLAB YALMIP. 
The estimation errors with noiseless data are shown in Fig. \ref{err-compare}. We use the Frobenius norm to measure the estimation error of the weighting matrices in the objective function:
\[err = \inf_{\alpha >0} \frac{\Vert \diag(
\hat{Q}, \hat{R})
\cdot \alpha - \diag(Q, R) \Vert_F}{\Vert \diag(Q, R)\Vert_F}.\]
From Fig. \ref{err-compare}, we can find that the estimation errors remain overall low. {Since Problem \ref{deioc} needs to optimize more variables and computation contains numerical noises, simplified 3DIOC problem achieves a more robust and accurate estimation. We also observe that as $T_{\textup{ini}}$ increases, the estimation error shows a decreasing trend. One explanation is longer initial trajectory ensures a more accurate initial state estimation.}

\begin{figure}[htb]
    \centering
\includegraphics[width=0.35\textwidth]{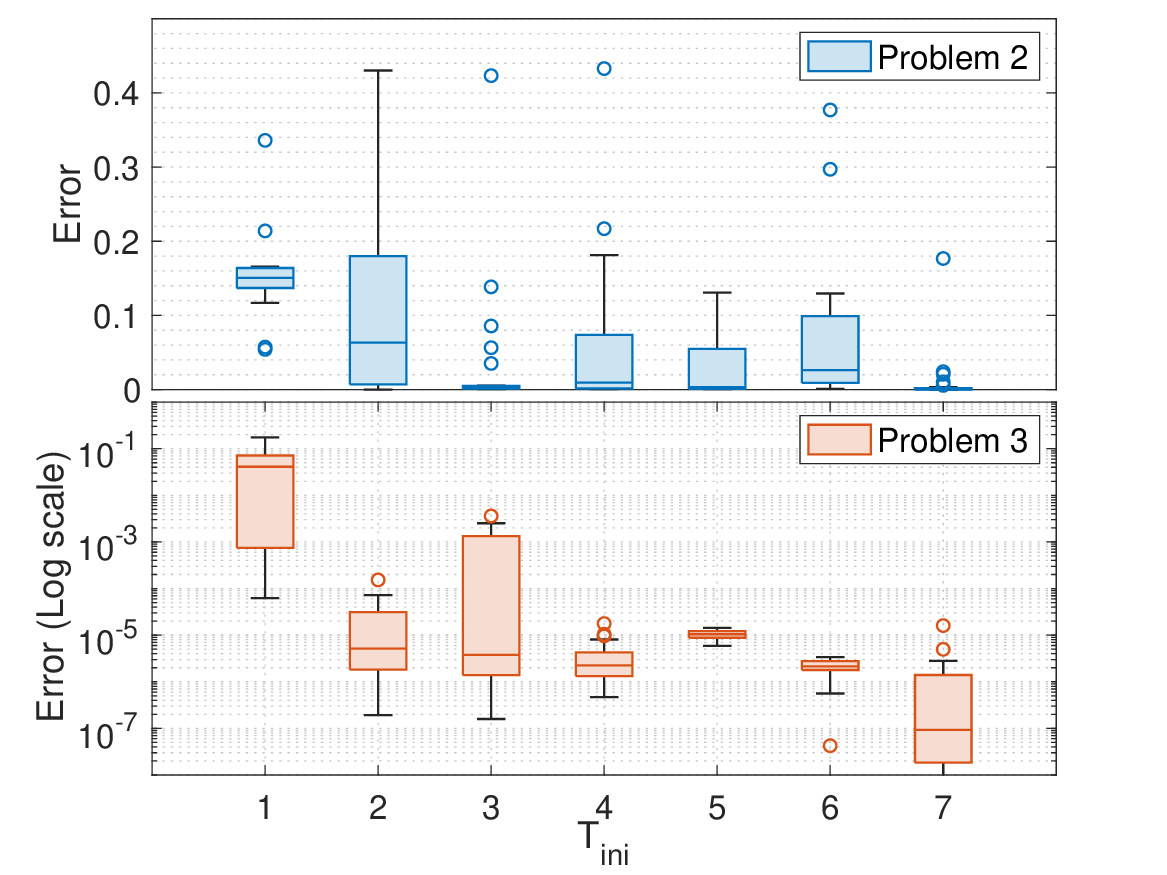}

    \caption{{Estimation errors to $Q,R$ without noises. At each $T_{\textup{ini}}$ we conduct the simulation for $30$ times.}}
    \label{err-compare}

\end{figure}

\begin{table*}[htb]\label{tab:baseline}
    \centering
{
    \caption{Comparison with Baseline Methods}
    \scalebox{0.88}{
    \begin{tabular}{ccccc}
    \toprule
    Method & \makecell{Observation\\model} & \makecell{Data requirement} & \makecell{Error} & \makecell{Computation} \\
    \midrule
    \makecell[l]{Sys-ID IOC \cite{yu2021system}} & \makecell{State}  & \makecell{1 offline traj.~($T\!=\!100$) \\ + 1 optimal traj. \\1 offline traj.~($T\!=\!1000$) \\ + 1 optimal traj.} & \makecell{0.279 \\~\\ 0.035} & \makecell{SysID + Kalman \\+ SVD} \\
    \midrule
    \makecell[l]{MaxEnt IRL \cite{ziebart2008maximum}} & \makecell{State} & \makecell{100 steps \\ 1000 steps} & \makecell{0.334 \\ 0.105} & \makecell{SysID + iterative\\L-BFGS (500+ iters)} \\
    \midrule
    \makecell[l]{\textbf{KKT 3DIOC} \\ \textbf{(Proposed)}} & \makecell{Output \\ ($C\!\neq\!I,D\!\neq\!0$)} & \makecell{1 offline traj.~\textbf{($\mathbf{T\!=\!50}$)} \\ + 1 optimal traj.} & \textbf{0.076} & \makecell{One-shot SDP/QP\\($<$1\,s)} \\
    \bottomrule
    \end{tabular}}
}
\end{table*}

{To demonstrate the efficiency in both computation cost and data requirements, we compare simplified 3DIOC with two representative IOC/IRL baselines:
\begin{enumerate}[label=(\roman*)]
\item \textbf{IOC via system identification approach (Sys-ID IOC) \cite{yu2021system}}. We implement this method following a multi-stage pipeline: (a)~perform subspace system identification (N4SID \cite{van1994n4sid}) for dynamic model $(\hat{A},\hat{B})$; (b)~estimate states via RTS Kalman Smoother; (c)~apply algorithm in \cite{yu2021system} to recover $Q$ and $R$.
\item \textbf{Maximum Entropy IRL (MaxEnt IRL) \cite{ziebart2008maximum}}. This method performs state estimation followed by Maximum Causal Entropy optimization. It fits $Q,R$ by minimizing the negative log-likelihood
\begin{equation}\nonumber
\begin{aligned}
& \mathcal{L}(Q,R) = -\sum_{i=1}^{N} \log P(\tau_i\,|\,Q,R),\\
& P(\tau\,|\,Q,R) \propto \exp\!\Big(-\sum_{t} \tfrac{1}{2}(x_t^\top Q x_t + u_t^\top R u_t)\Big),
\end{aligned}
\end{equation}
where $\tau$ denotes a trajectory and the soft-optimal policy is obtained from the identified model. In practice we optimize $\mathcal{L}$ with L-BFGS-B using Monte Carlo rollouts.
\end{enumerate}
As results shown in Table \ref{tab:baseline}, the proposed KKT-based 3DIOC achieves lower estimation error while requiring less data and computation. This is consistent with our theoretical analysis. The proposed method leverages Fundamental Lemma to establish a \emph{direct} model-free optimality condition, bypassing the two-stage ``identify then optimize'' pipeline that introduces compounding errors in the baselines. Since two baselines require state access, an additional state observer is needed which further degrades the estimation accuracy.
}

We conduct noise-corrupted case on an LQR controller. Suppose $C = I_2, D = 0$ and
\[
A = \begin{pmatrix}
1 & 1 \\ 0  & 1
\end{pmatrix}, B = \begin{pmatrix}
0 \\ 1
\end{pmatrix}, Q = \begin{pmatrix}
1 & 0.2\\0.2 & 0.8
\end{pmatrix}, R =0.4.
\]
Fig. \ref{err-noise} shows the estimation error of Problem \ref{inf_qr_pro}. We add Gaussian noises to the input $\Tilde{u}$. As the signal-to-noise ratio (SNR) (dB) becomes larger, the estimation error and variance both decrease. The estimation error is low for most times, while several large errors appear randomly, which is consistent with our perturbation analysis. The robustness to noise is related to the spectral gap of matrix $\Tilde{\Phi}^p$. To demonstrates the computation efficiency, we report the computation time shown in Table \ref{time-tab}. We can find that though the initial trajectory length becomes longer, the unknown variables number only depends on $n,m$, so the time cost remains a constant. {We also conduct Monte Carlo tests under diverse noise distributions including representative non-Gaussian perturbations: Laplace, bounded uniform and sparse outlier. Fig. \ref{fig:mc_test} shows the qualitative robustness persists across these non-Gaussian noise models.}
\begin{figure}[htb]
\centering
\includegraphics[width=0.35\textwidth]{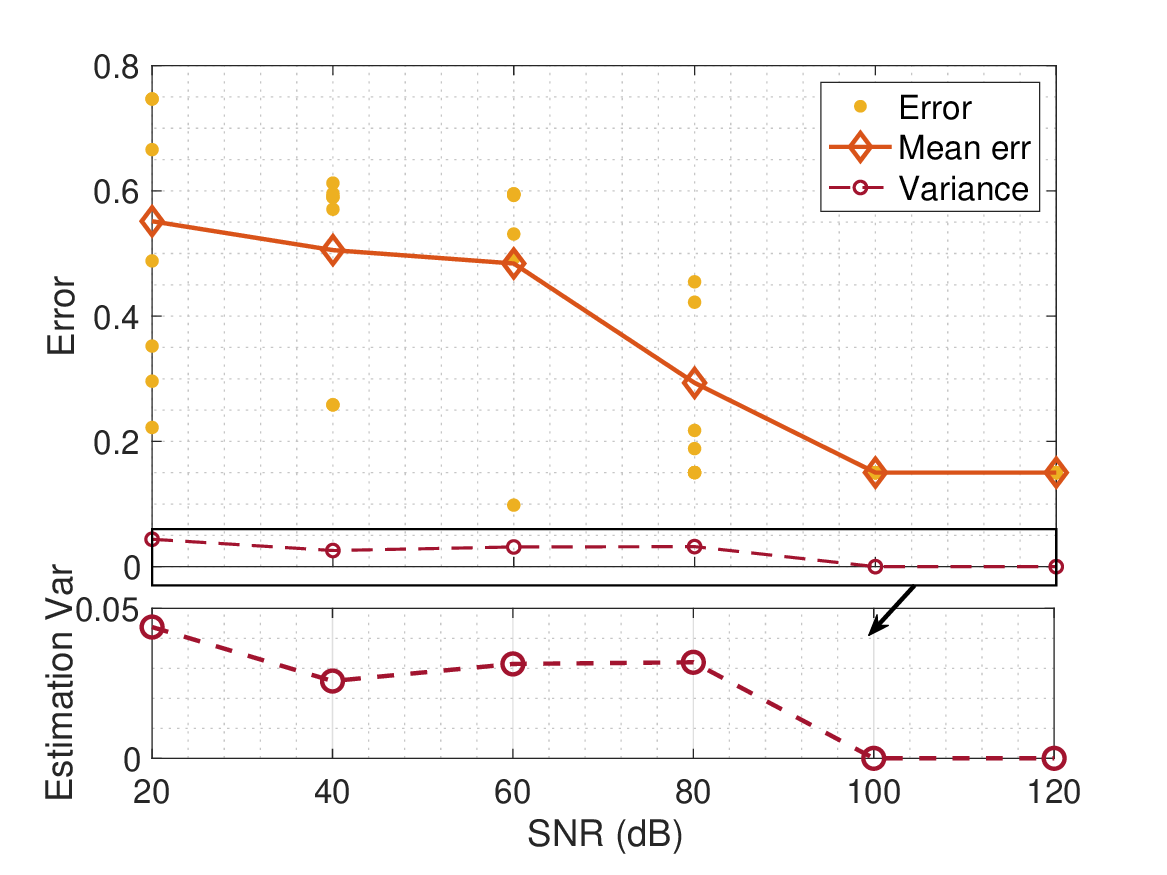}
\caption{Estimation errors with the presence of observation noises. We conduct the simulation $15$ times for each variance. The orange scatters are errors at each experiment. The solid red curve represents mean of the error and the dashed line for variance.}
\label{err-noise}
\end{figure}

\begin{figure}[htb]
{
    \centering
    \includegraphics[width=0.8\linewidth]{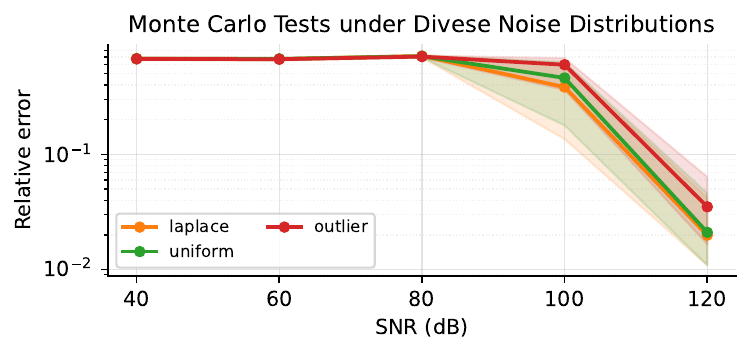}
    \caption{Monte Carlo Validation for noisy case under diverse noise distributions (50 independent trials).}
    \label{fig:mc_test}
}
\end{figure}

\begin{table}[htb]
    \centering
    \caption{Computation Efficiency of Problem \ref{inf_qr_pro}}
	\label{time-tab}
    \scalebox{0.9}{\begin{tabular}{cccccc}
    \toprule    
    $T_{\textup{ini}}$ & 3 & 5 & 7 & 9 & 11 \\    
    \midrule   
{Time(s)} & 0.6879 & 0.5949 & 0.4318 & 0.5959 & 0.4381\\
    \bottomrule   
    \end{tabular}}
\end{table}

\begin{figure}[b]
    \centering
    \includegraphics[width=0.8\linewidth]{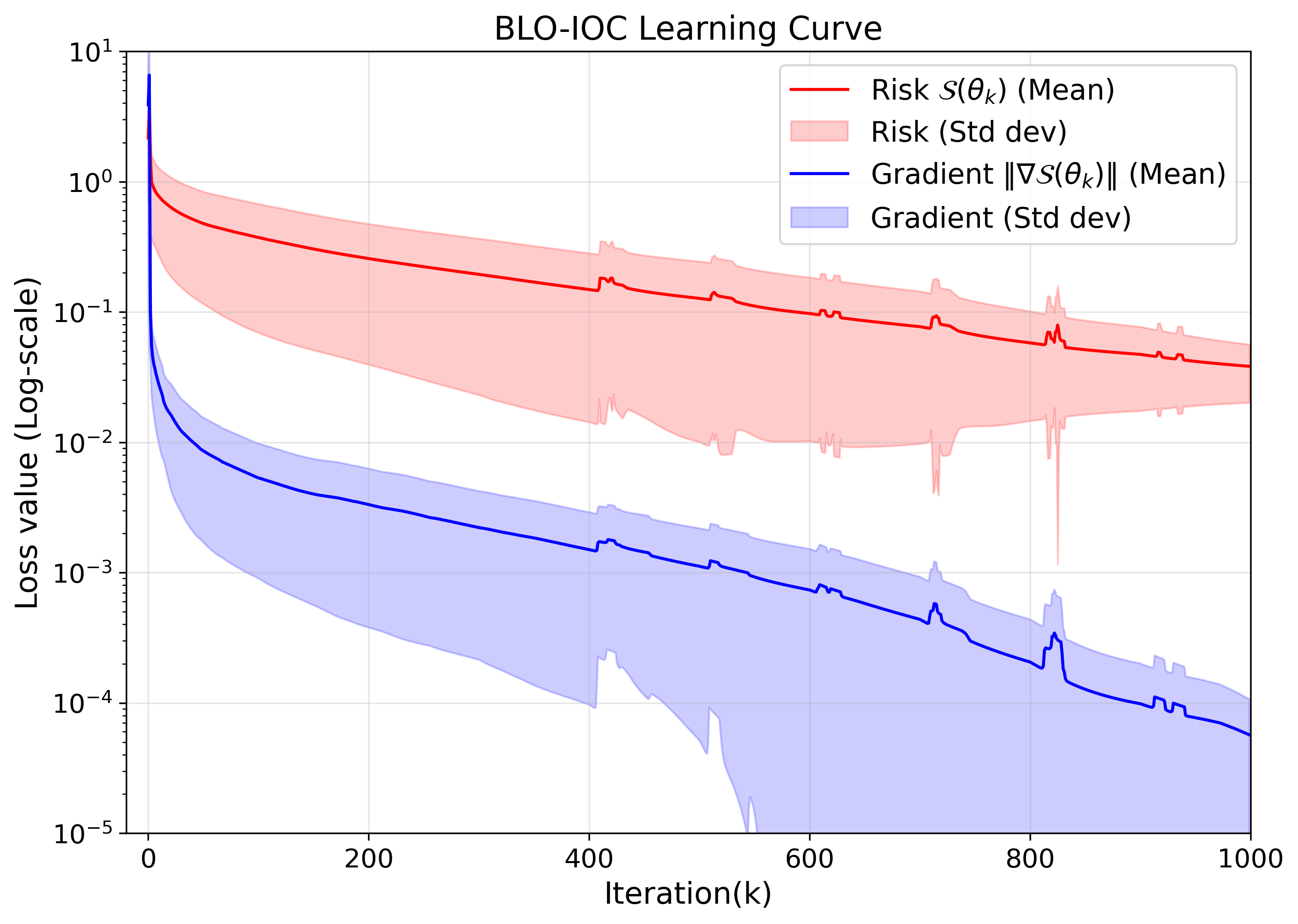}
    \caption{The learning curves for bi-level IOC optimization. The red curve represents the risk loss $\mathcal{S}(\theta_k)$, while the blue curve shows the gradient value $\nabla \mathcal{S}(\theta_k)$ at each iteration.}
    \label{fig:blo-lrc}
\end{figure}
\subsection{Simulation for BLO-IOC}
\begin{figure*}[tb]
    \centering
    \subfigure[Parameter set 1]{
\includegraphics[width=0.3\linewidth]{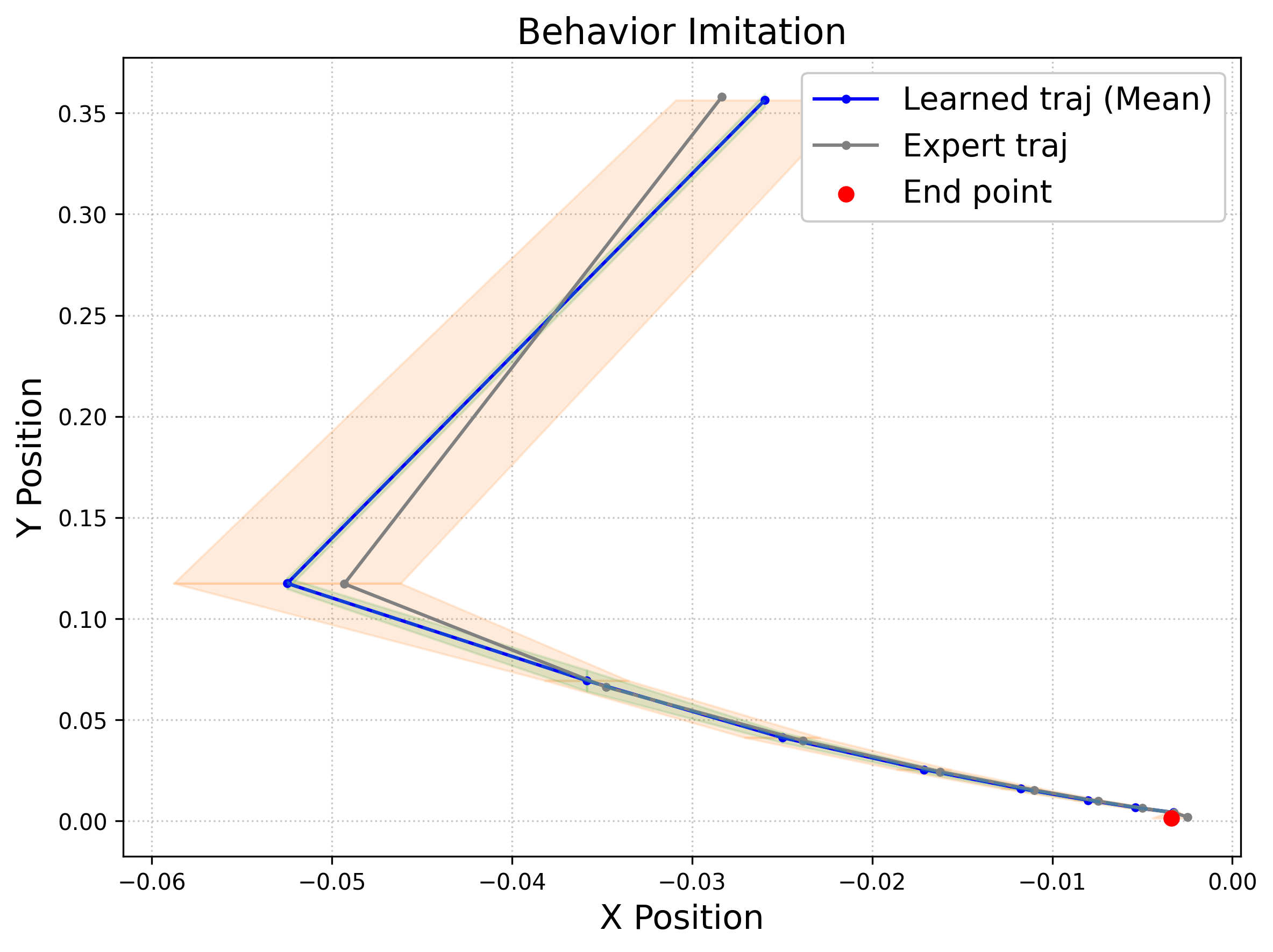}
    }
    \subfigure[Parameter set 2]{
\includegraphics[width=0.3\linewidth]{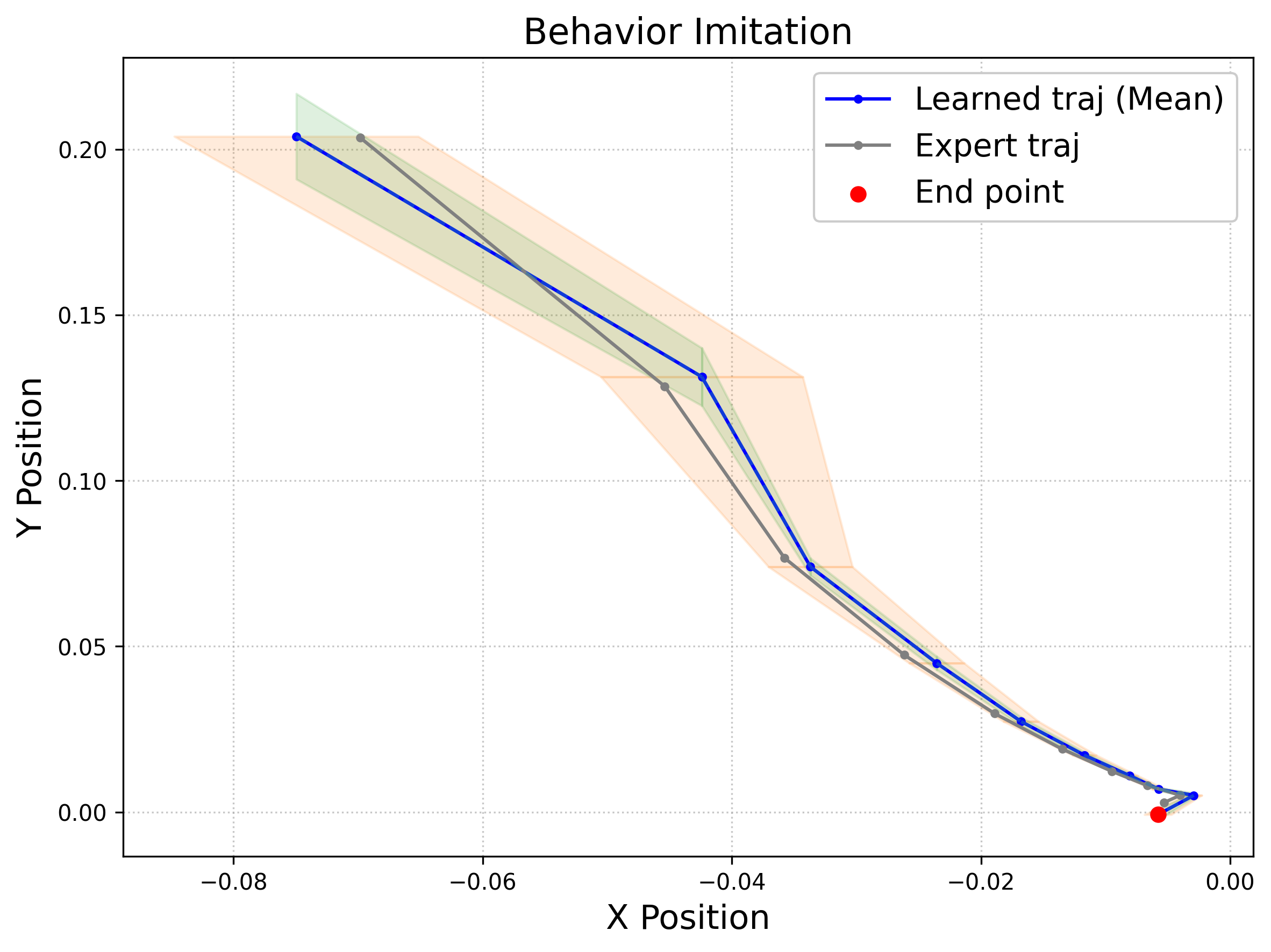}
    }
    \subfigure[Parameter set 2 (with noises)]{
\includegraphics[width=0.3\linewidth]{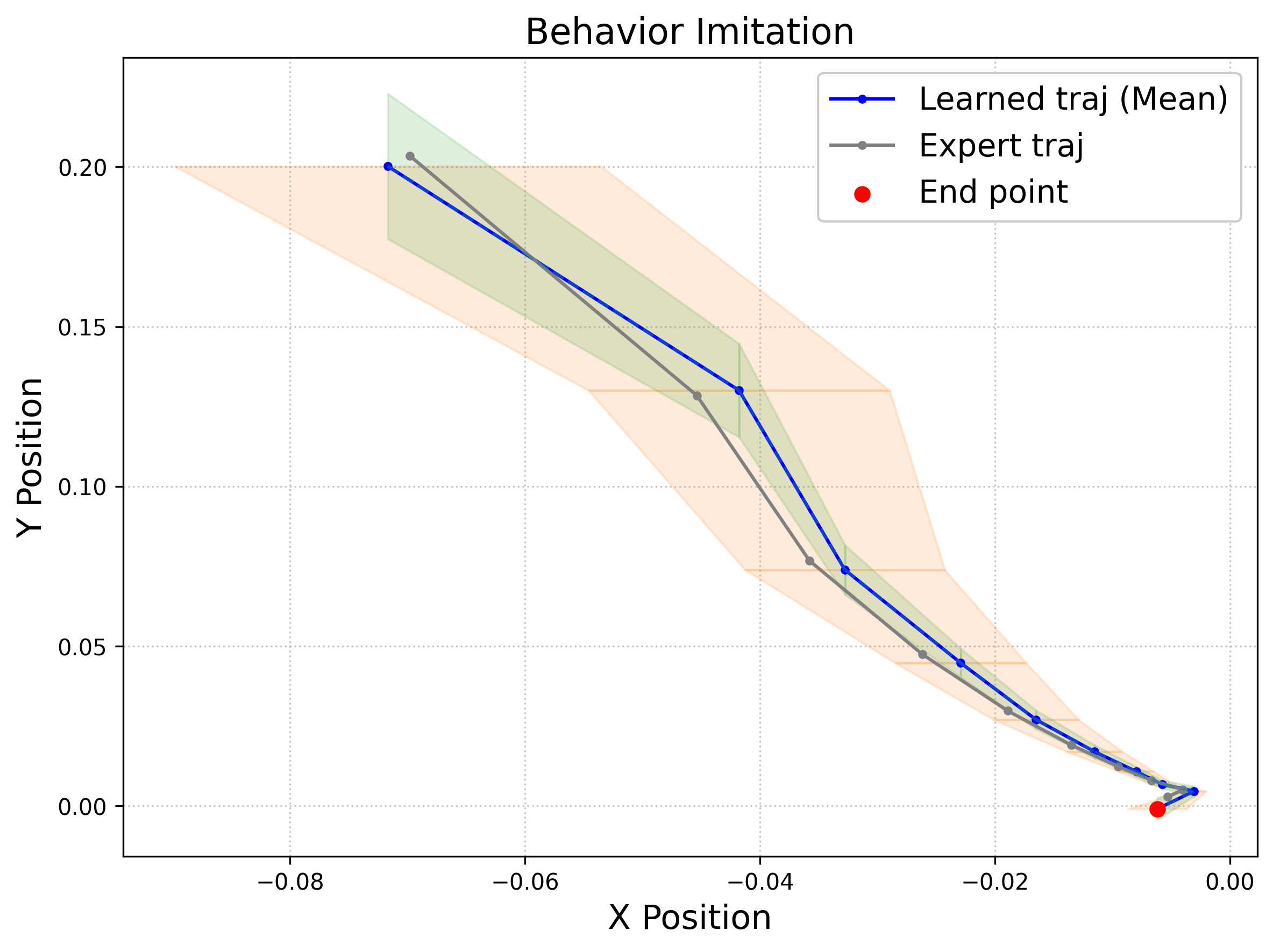}
    }
    \caption{The learned input trajectory generated by the estimation $\hat{\theta}$ and comparison with the expert's behavior. The blue curve is the mean value of learned input trajectories $u^*_{\hat{\theta}}$ and the gray curve is the expert inputs $\Tilde{u}$. The red dot shows the end position. The light shaded areas illustrate the standard deviation. (a) and (b) are conducted under different $Q,R$ sets, while (c) considers observation noises.}
    \label{fig:traj-plot}
\end{figure*}

We then employ the bi-level optimization in Problem \ref{blo-ioc} to estimate $Q,R$. Note that the lower and upper bounds in the constraint set $\mathcal{D}$ are mainly used for convergence analysis. In the practical simulation, we only need to ensure $Q,R$ are positive definite and do not go to infinity (i.e., set $\bar{\epsilon}$ to be large enough) to facilitate the projection. For Algorithm \ref{blo-alg}, let the learning rates $\alpha =1\mathrm{e}{-10}, \beta = 1\mathrm{e}{-9}$. We implement the learning rate for lower level larger than the upper level problem to guarantee the overall convergence \cite{hong2023two} and adjust the rates dynamically based on the gradient descent behavior. The initial guess to the weighting parameter $\theta_0$ is generated from a uniform distribution on $[0,1]^{m(m+1)n(n+1)/2}$. The tolerance decrease sequence can be chosen as $\epsilon_k = \mathcal{O}(k^{-p}), p>1$ and a maximum iteration bound is also set for the lower problem to prevent large computational consuming. Let $\texttt{max\_iter}$ as $1000$ and conduct Algorithm \ref{blo-alg}. The algorithm is run on Intel i9-14900K CPU.

The learning curves during the optimization are shown in Fig. \ref{fig:blo-lrc}. The vertical axis is plotted on a logarithmic scale. We can find as the iteration increases, both the gradient and risk decrease. These illustrate the convergence of the algorithm, and the gap between the generated behavior $u^*_{\theta_k}$ corresponding to learned $\theta_k$ and the target expert's behavior $\Tilde{u}$ gets smaller and smaller. The convergence speed is close to exponential convergence near the local minimum. It is worth noting that though the optimal solution to the inner control problem can be explicitly calculated by \eqref{skkt-eq}, we observe a better numerical stability with gradient decent.

We plot the learned input trajectory corresponding to the optimization result $\hat{\theta}$ in Fig. \ref{fig:traj-plot}. We change the weighting parameters $Q,R$ and imitate two different expert's trajectories. From (a) and (b) we can find the control input generated by the estimation $\hat{\theta}$ is very close to the expert's behavior and imitates the shape, such as trends and turning points, successfully. To see the robustness of the proposed algorithm to noises. We add Gaussian observation noises $\mathcal{N}(0,0.01^2)$ (the SNR is about $20$dB) to the expert's trajectory and the comparison result is shown in (c). The algorithm still imitates the behavior well though the standard deviation becomes larger than noiseless case. 
{Under a conservative dense linear-algebra accounting, the computation cost of a per-iteration in our implementation is dominated by $\mathcal{O}((mN)^3)$, and $\mathcal{O}((mN)^3+(mN)^2 d)$ when accounting for the implicit-differentiation step, with additional matrix multiplications of lower order compared with the cubic solve.}

Our code is available
at: \href{https://github.com/cdqu/3DIOC-Direct-data-driven-inverse-optimal-control}{3dioc-code}.

\section{Conclusion}\label{conc}
This paper proposes a direct data-driven IOC algorithm for LQ control problem in LTI systems. With the input-output representation introduced from the behavioral system theory, the IOC problem is solved with observation trajectories only. Identifiability condition and perturbation analysis of the estimation are provided. We propose both KKT-based and bi-level formulation IOC algorithms, establishing the connection between estimation results and a trade-off between computation cost and robustness. Simulations demonstrate that our algorithm achieves high computation efficiency and requires less data. {By finding an appropriate Koopman operator and a lift mapping \cite{martin2023guarantees}, this 3DIOC framework can also be extended to nonlinear systems, which will be investigated in our future work.}

\appendix
\section{Proof of Proposition \ref{iden_th}}\label{th1-prf}
From equation \eqref{kkt_cons}, we have
\begin{equation}\nonumber
\begin{aligned}
& \mathcal{F}(\lambda_{\textup{ini}}, Q,R) =w_p^T \diag(\lambda_{\textup{ini}}) \mathbf{1}  + 2 U_f^T \mathcal{R} \Tilde{u} + 2 Y_f^T \mathcal{Q} \Tilde{y}\\
& = \begin{pmatrix}
w_p^T & 2 U_f^T & 2 Y_f^T
\end{pmatrix}
\begin{pmatrix}
\diag(\lambda_{\textup{ini}}) &\\ & \diag(\mathcal{R},\mathcal{Q})
\end{pmatrix}
\begin{pmatrix}
\mathbf{1} \\ \Tilde{u}\\\Tilde{y}
\end{pmatrix}.
\end{aligned}
\end{equation}
Vectorize the function. We have
\begin{equation}
\begin{aligned}
& \vecc(\mathcal{F}(\lambda_{\textup{ini}}, Q,R)) =\underbrace{{\begin{pmatrix}
\mathbf{1}^T & \Tilde{u}^T & \Tilde{y}^T
\end{pmatrix}
\otimes  \begin{pmatrix}
w_p^T & 2 U_f^T & 2 Y_f^T
\end{pmatrix}}}_{\Psi} \\
& \cdot {\vecc(\diag(
\diag(\lambda_{\textup{ini}}),\diag(\mathcal{R},\mathcal{Q})
)} = \Psi \cdot \mathcal{M}(\lambda_{\textup{ini}},R,Q).
\end{aligned}
\end{equation}
The size of matrix $\Psi$ is $[T_d-(T_{\textup{ini}}+N) +1] \times (m+p)^2(T_{\textup{ini}}+N)^2$. Since the rank of $\begin{pmatrix}
\mathbf{1}^T & \Tilde{u}^T & \Tilde{y}^T
\end{pmatrix}$ is $1$, considering the property of Kronecker product, we have
\begin{equation}\nonumber
\begin{aligned}
& \rank(\Psi) \!=\! \rank\begin{pmatrix}
w_p^T & 2 U_f^T & 2 Y_f^T
\end{pmatrix} \!=\!\rank({H}_{T_{\textup{ini}}+ N} (w^d))\\
& =\rank(\mathcal{H}_{T_{\textup{ini}}+N} (w^d))= m(T_{\textup{ini}}+N)+n.
\end{aligned}
\end{equation}
There are lots of zeros and duplicate items in the vector $\mathcal{M}(\lambda_{\textup{ini}},R,Q)$. Observing the structure of $\mathcal{M}(\lambda_{\textup{ini}},R,Q)$ and $\Psi$, we can obtain
\begin{equation}\nonumber
\begin{aligned}
\Psi \mathcal{M}(\lambda_{\textup{ini}},R,Q) = \begin{pmatrix}
w_p^T & \Tilde{U}_f & \Tilde{Y}_f\end{pmatrix} \col(\lambda_{\textup{ini}}, \vecc(R), \vecc(Q))
\end{aligned}
\end{equation}
where $\Tilde{U}_f = \sum_{i=0}^{N-1} \Tilde{u}_i^T \otimes 2U_f^T(im+1:(i+1)m,:)$, $\Tilde{Y}_f =  \sum_{i=0}^{N-1} \Tilde{y}_i^T \otimes 2Y_f^T(ip+1:(i+1)p,:)$. {Since weighting matrices $Q,R$ are symmetric, we do further simplification with duplication matrices $D_m, D_p$, which satisfy
\begin{equation}\label{dup-mat}
D_m \vech(R) = \vecc(R), D_p \vech(Q) = \vecc(Q).
\end{equation}
The explicit formula for a duplication matrix for an $m \times m$ matrix is $D_m^T = \sum_{i \geq j} \tau_{ij}\vecc(T_{ij})^T$, where $\tau_{ij}$ is a unit vector of order $\frac{1}{2}m(m+1)$ having value $1$ in the position $(j-1)m+i-\frac{1}{2} j(j-1)$ and $0$ elsewhere, and $T_{ij}$ is an $m\times m$ matrix with $1$ in position $(i,j), (j,i)$ and $0$ elsewhere. }

{With above definitions, denote
\begin{equation}
\Tilde{\Psi} = \begin{pmatrix}
w_p^T & \Tilde{U}_f & \Tilde{Y}_f\end{pmatrix} \cdot \diag(I,D_m,D_p)
\end{equation}
We finally have the equation \eqref{kkt_cons} as
\begin{equation}\label{psi-eq}
\begin{aligned}
& \Psi \cdot \mathcal{M}(\lambda_{\textup{ini}},R,Q) \\&=\begin{pmatrix}
w_p^T & \Tilde{U}_f & \Tilde{Y}_f\end{pmatrix} \col(\lambda_{\textup{ini}}, D_m \cdot \vech(R), D_p \cdot \vech(Q)) \\& = \begin{pmatrix}
w_p^T & \Tilde{U}_f & \Tilde{Y}_f\end{pmatrix} \diag(I,D_m,D_p) \col(\lambda_{\textup{ini}}, \vech(R), \vech(Q))\\&=\Tilde{\Psi} \cdot \col(\lambda_{\textup{ini}}, \vech(R), \vech(Q)) = 0
\end{aligned}
\end{equation}}
Matrix $\Tilde{\Psi}$ is of order $T-(T_{\textup{ini}}+N)+1 \times (m+p)T_{\textup{ini}}+\frac{1}{2}(m^2+m+p^2+p)$. 
Observing \eqref{psi-eq}, if the rank of $\Tilde{\Psi}$
\begin{equation}
\rank(\Tilde{\Psi}) = (m+p)T_{\textup{ini}}+\frac{1}{2}(m^2+m+p^2+p)-1,
\end{equation}
supposing the real weighting matrices are $R,Q$, a solution $(\lambda'_{\textup{ini}}, \vech(R'), \vech(Q'))$ obtained from \eqref{psi-eq} satisfies
$Q = \alpha Q', R = \alpha R'$
for a scalar $\alpha \in \mathbb{R}_{+}$.

\section{Proof of Theorem \ref{s-iden_th}}\label{thm2-prf}
Considering equation \eqref{skkt-eq}, we have
\begin{equation}\nonumber
\begin{aligned}
& \nabla_u J(\Tilde{u}) =: (K_f^T \mathcal{Q} K_f+\mathcal{R}) \Tilde{u} + K_f^T \mathcal{Q}^T K_p z_{\textup{ini}}\\& = \mathcal{R}\Tilde{u} + K_f^T \mathcal{Q} (K_f \Tilde{u} + K_p z_{\textup{ini}}) \\
& = {\begin{pmatrix}
K_f^T & I_{mN}
\end{pmatrix}} \diag(\mathcal{Q}, \mathcal{R}) \begin{pmatrix}
K_f \Tilde{u} + K_p z_{\textup{ini}} \\ \Tilde{u}
\end{pmatrix}=0.
\end{aligned}
\end{equation}
$I_{mN}$ represents the identity matrix of $mN \times mN$. Observing \eqref{y-pseudo}, we can find that $K_f \Tilde{u} + K_p z_{\textup{ini}} := \Tilde{y}^{es}$ is actually an estimation to $\Tilde{y}$.
Let $\Tilde{K}_f = \begin{pmatrix}
K_f^T & I_{mN}
\end{pmatrix}$. We have
\begin{equation}\nonumber
\begin{aligned}
& \vecc(\nabla_u J(\Tilde{u})) = {\begin{pmatrix}
(\Tilde{y}^{es})^T & (\Tilde{u})^T
\end{pmatrix} \otimes \Tilde{K}_f} \cdot \vecc(\diag(\mathcal{Q}, \mathcal{R})).
\end{aligned}
\end{equation}

Similar to the proof of Proposition \ref{iden_th}, we want to eliminate the zeros and duplicate items in $\vecc(\diag(\mathcal{Q},\mathcal{R}))$. We have
\begin{equation}\nonumber
\vecc(\nabla_u J(\Tilde{u})) = \begin{pmatrix}
\Phi_y & \Phi_u
\end{pmatrix} \cdot \col(\vecc(Q),\vecc(R)),
\end{equation}
where $\Phi_y = \sum_{i=0}^{N-1} (\Tilde{y}^{es}_i)^T \otimes K_f^T(ip+1:(i+1)p,:)$ and $\Phi_u = \sum_{i=0}^{N-1} (\Tilde{u}_i)^T \otimes I_{mN}(im+1:(i+1)m,:)$. 
With duplication matrices $D_p,D_m$ defined in \eqref{dup-mat}, denote
\begin{equation}\label{phi}
\Tilde{\Phi} = \begin{pmatrix}
\Phi_y & \Phi_u
\end{pmatrix} \diag(D_p, D_m) = \begin{pmatrix}
    \Phi_y D_p & \Phi_u D_m
\end{pmatrix}.
\end{equation}
We finally derive the constraint as
\begin{equation}\label{fin-s-kkt}
\vecc(\nabla_u J(\Tilde{u})) = \Tilde{\Phi} \cdot \col(\vech(Q),\vech(R))=0.
\end{equation}
Observing \eqref{fin-s-kkt}, the unknown variable is only composed by $Q,R$. If there exists a proper $T_{\textup{ini}}$ making the rank of $\Tilde{\Phi}$
\begin{equation}
\rank(\Tilde{\Phi}) = \frac{1}{2}(m^2+m+p^2+p)-1,
\end{equation}
supposing the real weighting matrices are $Q,R$, a solution $Q',R'$ obtained from \eqref{fin-s-kkt} satisfies
$Q = \alpha Q', R = \alpha R'$
for a scalar $\alpha \in \mathbb{R}_{+}$.

\section{Proof of Corollary \ref{per-ana}}\label{thm3-prf}
{
Notice that in Problem \ref{inf_qr_pro} the input trajectory contains observation noises. According to the derivation in Appendix \ref{thm2-prf}, we have
\begin{align*}
&\Vert (K_f^T \mathcal{Q} K_f+\mathcal{R}) \Tilde{u}^p + K_f^T \mathcal{Q}^T K_p z_{\textup{ini}} \Vert_2^2\\ &= \Vert \Tilde{\Phi}^p \cdot \col(\vech(Q),\vech(R)) \Vert_2^2,
\end{align*}
where $\Tilde{\Phi}^p$ is calculated similarly as $\Tilde{\Phi}$ substituted $\Tilde{u}$ with $\Tilde{u}^p$. Therefore, with the coefficient matrix $\Tilde{\Phi}^p$, Problem \ref{inf_qr_pro} is now a classic Rayleigh quotient problem described as
\[
\min_{\theta} \Vert \Tilde{\Phi}^p \cdot \theta \Vert_2^2 ~~~ s.t. ~ \Vert \theta \Vert_2^2 =1.
\]
Here we substitute $\col(\vech(Q),\vech(R))$ with variable $\theta$. Denote the optimal solution as $\rho(\Tilde{\Phi}^p)$. It is actually the eigenvector of the matrix $(\Tilde{\Phi}^p)^T \Tilde{\Phi}^p$ corresponding to its smallest eigenvalue. 
By carrying out the SVD decomposition, we have
\[
(\Tilde{\Phi}^p)^T \Tilde{\Phi}^p = \begin{pmatrix}
U & u_1
\end{pmatrix} \diag(\Sigma, \sigma_1) \col(V^T, v_1^T),
\]
where $\sigma_1$ is the smallest singular value of $\Tilde{\Phi}^p$ and the optimal solution $\rho(\Tilde{\Phi}^p) = v_1$.
}

{
In this case, considering a perturbation $\Delta$ on the coefficient matrix, the sensitivity of Problem \ref{inf_qr_pro} is measured by the difference between the two smallest eigenvectors corresponding to $\Tilde{\Phi}^p$ and perturbed $\Tilde{\Phi}^p+\Delta$ separately. This can be analyzed by the eigenvalue perturbation theory \cite{rellich1969perturbation}.
Let $\beta$ be the angle between $\rho(\Tilde{\Phi}^p) = v_1$ and $\rho(\Tilde{\Phi}^p + \Delta) = v'_1$. We first show when the perturbation $\Delta$ is relatively small, the Euclidean distance between $v_1$ and $v'_1$ is bounded by $\sin(\beta)$. We have
\[
\Vert v_1-v'_1\Vert_2^2 = \Vert v_1\Vert_2^2 + \Vert v'_1\Vert_2^2 - 2 v_1^T v'_1 = 2-2 \cos(\beta),
\]
where the second equation is due to normalization of the eigenvector (e.g., $\Vert v_1\Vert_2^2 =1$). Using the trigonometric identity, there is
\[
\Vert v_1-v'_1\Vert_2 = \sqrt{2-2 \cos(\beta)} = \sqrt{2} \sqrt{2 \sin^2(\frac{\beta}{2})} = 2 \sin(\frac{\beta}{2}).
\]
Therefore, for small angle $\beta$, we have $\Vert v_1-v'_1\Vert_2 \approx	2 \cdot \frac{\beta}{2} = \beta \approx \sin(\beta)$. Then according to Davis-Kahan-$\sin(\theta)$ theorem, we derive the upper bound
\begin{equation}
\Vert v_1-v'_1\Vert_2 \approx \sin(\beta) \leq \frac{\Vert E \Vert}{|\sigma_1 - \sigma_2|},
\end{equation}
where $\sigma_2$ is the second small singular value of $\Tilde{\Phi}^p$ and
\[
E = (\Tilde{\Phi}^p +\Delta )^T (\Tilde{\Phi}^p +\Delta) - (\Tilde{\Phi}^p)^T \Tilde{\Phi}^p \approx \Delta^T \Tilde{\Phi}^p+ (\Tilde{\Phi}^p)^T \Delta.
\]
we omit the higher order term $\Delta^T \Delta$. The proof is done.
}

\section{Proof of Theorem \ref{blo-conv}}\label{th-3-pf}
Observing Problem \ref{blo-ioc}, we have the following properties:
\begin{itemize}
\item \emph{L-smoothness}: For any $\theta \in \mathcal{D}$, the first derivative of $l$, $\nabla_{u^*_{\theta}} l$, is Lipschitz continuous (w.r.t $u^*_{\theta}$) with constant $L_{l_u} = 2$. For any $\theta \in \mathcal{D}$, the second derivative $\nabla_{u\theta}J$ is Lipschitz continuous (w.r.t $u^*_{\theta}$) with constant $L_{j_{u\theta}}$ and for any $u^*_{\theta} \in \mathbb{R}^{mN}$, $\nabla_{uu}J$ is Lipschitz continuous (w.r.t $\theta$) with constant $L_{j_{uu}}$.
\item \emph{Strong convexity}: For any $\theta \in \mathcal{D}$, $J(u)$ is strongly convex w.r.t $u$ with parameter $\mu_j >0$, i.e., $\mu_j I \preceq \nabla_{uu} J$.
\item \emph{Boundness}: The constraint set $\mathcal{D}$ is convex and compact. Since $\theta$ is bounded, the corresponding behavior $u^*_{\theta}$ is bounded. Then we have $\Vert \nabla_{u^*_{\theta}} l \Vert \leq C_{l_u}$ and $\Vert \nabla_{u\theta}J \Vert \leq C_{j_{u\theta}}$ for some constants $C_{l_u}, C_{j_{u\theta}}>0$.
\end{itemize}
With above properties, the following lemma holds.
\begin{lemma}[Lemma 2.2 in \cite{ghadimi2018approximation}]\label{contin-s}
$\nabla \mathcal{S}$ is Lipschitz continuous in $\theta$ with constant $L_s$, i.e., for any given $\bar{\theta}_1, \bar{\theta}_2 \in \mathcal{D}$, we have
\begin{equation}
\Vert \nabla \mathcal{S}(\bar{\theta}_1) - \nabla \mathcal{S}(\bar{\theta}_2) \Vert \leq L_s \Vert \bar{\theta}_1 - \bar{\theta}_2 \Vert,
\end{equation}
where
\begin{align*}
& L_s = \frac{(L_{l_u} + \bar{C})C_{j_{u \theta}}}{\mu_j} + C_{l_u} (\frac{L_{j_{u \theta}} C_{l_u}}{\mu_j} + \frac{L_{j_{u u}} C_{j_{u\theta}}}{\mu_j^2}),\\
& \bar{C}= \frac{L_{l_u} C_{j_{u \theta}}}{\mu_j} + C_{l_u} (\frac{L_{j_{u \theta}} }{\mu_j} + \frac{L_{j_{u u}} C_{j_{u\theta}}}{\mu_j^2}).
\end{align*}
\end{lemma}
\begin{lemma}
The inner problem optimization in Algorithm \ref{blo-alg} achieves
\[
\Vert \bar{u}_{\theta_k} - u^*_{\theta_k} \Vert \leq \frac{1}{\mu_j} \Vert \nabla_u J(\bar{u}_{\theta_k}; \theta_k) \Vert \leq \frac{1}{\mu_j} \epsilon_k.
\]
\end{lemma}
\noindent This can be directly derived from the strong convexity of the inner problem. We then have $\sum_{k=0}^{\infty} \frac{1}{\mu_j} \epsilon_k < \infty$.

Thus, based on the above analysis, Problem \ref{blo-ioc} satisfies all the (A1-A3) assumptions in \cite{pedregosa2016hyperparameter}. The subsequent proof is similar to Theorem 2 in \cite{pedregosa2016hyperparameter}.

\section{Proof of Corollary \ref{risk-consis}}\label{co-2-pf}
Note that the upper level objective function in Problem \eqref{blo-ioc-noisy} is just the accumulation of that in Problem \ref{blo-ioc} for $\mathcal{T}$ trajectories. Thus Lemma \ref{contin-s} also gives continuity of $\mathcal{S}_\mathcal{T}(\theta)$. By applying the uniform law of large numbers \cite{jennrich1969asymptotic}, we have
\begin{equation}\label{ulln-of-s}
\sup_{\theta \in \mathcal{D}} |\mathcal{S}_\mathcal{T}(\theta) - \mathcal{S}(\theta)| \xrightarrow{P} 0.
\end{equation}
Take any $\theta_0 \in \arg \min \{\mathcal{S}(\theta)|\theta \in \mathcal{D}\}$. There is $\mathcal{S}_\mathcal{T}(\hat{\theta}) \leq \mathcal{S}_\mathcal{T}(\theta_0)$, and we have
\[
\mathcal{S}(\hat{\theta}_\mathcal{T}) + \mathcal{S}_\mathcal{T}(\hat{\theta}_\mathcal{T}) - \mathcal{S}(\hat{\theta}_\mathcal{T}) \leq \mathcal{S}(\theta_0) - \mathcal{S}(\theta_0) + \mathcal{S}_\mathcal{T}(\theta_0).
\]
Rearranging the terms gives
\[
\mathcal{S}(\hat{\theta}_\mathcal{T}) - \mathcal{S}(\theta_0) \leq |\mathcal{S}_\mathcal{T}(\hat{\theta}_\mathcal{T}) - \mathcal{S}(\hat{\theta}_\mathcal{T})|+|\mathcal{S}_\mathcal{T}(\theta_0) - \mathcal{S}(\theta_0)|.
\]
Due to the definition of $\theta_0$, we have $\mathcal{S}(\theta_0) \leq \mathcal{S}(\hat{\theta}_\mathcal{T})$. Holding the continuity of $\mathcal{S}_\mathcal{T}(\theta)$ and \eqref{ulln-of-s}, we derive $\mathcal{S}(\hat{\theta}_\mathcal{T}) - \mathcal{S}(\theta_0) \xrightarrow{P} 0$. The proof is done.

\section{Proof of Lemma \ref{lem-blo-opt}}\label{lem-6-pf}
The lemma is proved by deriving the necessary optimality condition of Problem \ref{blo-ioc}.
Since $u^*_{\theta}$ is also a function with respect to $\theta$, according to the chain rule, the derivative of the upper level objective function to $\theta$ is
\begin{equation}\label{ul-derive}
{\nabla \mathcal{S}} = \frac{\partial l}{\partial u^*_{\theta}} \cdot \nabla_{\theta} u^*_{\theta} = 2(u^*_{\theta}-\Tilde{u})^T\cdot \nabla_{\theta} u^*_{\theta}.
\end{equation}
For $\nabla_{\theta} u^*_{\theta}$, we utilize the implicit function theorem. Calculate the lower level function derivative to $u$. We have
\[
{\nabla_{u}J(u;{\theta})}=2(K_f^T \mathcal{Q} K_f+\mathcal{R}) u + 2 K_f^T \mathcal{Q}^T K_p z_{\textup{ini}}
\]
When the lower level problem achieves the optimality, for any fixed $\theta$, its corresponding $u^*_{\theta}$ satisfies
\[
{\nabla_{u}J(u^*_{\theta};{\theta})=0}.
\]
Then calculate the derivative of both side with respect to $\theta$. We obtain
\[
\nabla_{u u} J(u^*_{\theta}; \theta) \cdot \nabla_{\theta} u^*_{\theta} + \nabla_{u \theta}J(u^*_{\theta};{\theta}) = 0.
\]
Therefore, we have
\begin{align*}
\nabla_{\theta} u^*_{\theta} & = -\{\nabla_{u u} J(u^*_{\theta}; \theta)\}^{-1} \cdot \nabla_{u \theta}J(u^*_{\theta};{\theta})\\
& = -\frac{1}{2} (K_f^T \mathcal{Q} K_f+\mathcal{R})^{-1} \cdot 2\Tilde{\Phi}(u^*_{\theta})
\end{align*}
Note that previous proof (Appendix \ref{thm2-prf}) shows $\nabla_u J(u;\theta)$ can be written as a matrix $\Tilde{\Phi}$ multiplying the parameter $\theta$. Thus we have $\nabla_{u \theta}J(u^*_{\theta};{\theta}) = 2\Tilde{\Phi}(u^*_{\theta})$ here. $\Tilde{\Phi}(u^*_{\theta})$ is defined by substituting the $\Tilde{u}$ by $u^*_{\theta}$ during the calculation.

Substitute the above equation into \eqref{ul-derive}. There is
\begin{equation}
\begin{aligned}
{\nabla \mathcal{S}} = 2(\Tilde{u} - u^*_{\theta})^T \cdot (K_f^T \mathcal{Q} K_f+\mathcal{R})^{-1} \cdot \Tilde{\Phi}(u^*_{\theta}).
\end{aligned}
\end{equation}
Due to the optimality condition of the upper level problem, we have
\[
(\Tilde{u} - u^*_{\theta})^T \cdot (K_f^T \mathcal{Q} K_f+\mathcal{R})^{-1} \cdot \Tilde{\Phi}(u^*_{\theta}) = 0.
\]

\section{Proof of Proposition \ref{connect}}\label{pro-4-pf}
The proof is quite obvious. Suppose we have an solution $Q_1,R_1$ to Problem \ref{sim_3Dioc}. It satisfies the KKT condition \eqref{skkt-eq} which is
\[
(K_f^T \mathcal{Q}_1 K_f+\mathcal{R}_1) \Tilde{u} + K_f^T \mathcal{Q}_1^T K_p z_{\textup{ini}} = 0.
\]
Note that $\diag(Q_1,R_1) \succeq I$. $Q_1,R_1$ are both positive definite matrices. We have $K_f^T \mathcal{Q}_1 K_f+\mathcal{R}_1$ invertible and
\[
\Tilde{u} + (K_f^T \mathcal{Q}_1 K_f+\mathcal{R}_1)^{-1} (K_f^T \mathcal{Q}_1^T K_p z_{\textup{ini}})=0.
\]
Based on Lemma \ref{lem-blo-opt}, by substituting $Q_1,R_1$ into \eqref{blo-opt-con1}, there is
\begin{align*}
\Tilde{u} - u^*_{{\theta}_1} = \Tilde{u} - (-(K_f^T \mathcal{Q}_1 K_f+\mathcal{R}_1)^{-1} (K_f^T \mathcal{Q}_1^T K_p z_{\textup{ini}})) = 0.
\end{align*}
Thus $\nabla \mathcal{S}(\theta_1)=0$ and equation \eqref{blo-opt-con1} is satisfied. For the second-order condition \eqref{blo-opt-con2}, we check the Hessian matrix $\nabla^2 \mathcal{S}$. Denote $P(\theta) = (K_f^T \mathcal{Q} K_f+\mathcal{R})^{-1} \cdot \Tilde{\Phi}(u^*_{\theta})$. We have
\begin{equation}
\begin{aligned}
\nabla^2 \mathcal{S}(\theta_1) &= (\Tilde{u} - u^*_{{\theta_1}}) \nabla_{\theta_1} P(\theta_1) - (\nabla_{\theta_1} u^*_{\theta_1})^T P(\theta_1)\\
& = \underbrace{(\Tilde{u} - u^*_{{\theta_1}})}_{=0} \nabla_{\theta_1} P(\theta_1) + \underbrace{P(\theta_1)^T P(\theta_1)}_{\succ 0} \succ 0.
\end{aligned}
\end{equation}
Thus equation \eqref{blo-opt-con2} is also satisfied.

However, conversely, if we have a solution $Q_2,R_2$ satisfying Lemma \ref{lem-blo-opt}, we can not guarantee the equation $(K_f^T \mathcal{Q}_2 K_f+\mathcal{R}_2) \Tilde{u} + K_f^T \mathcal{Q}_2^T K_p z_{\textup{ini}} = 0$ given the current condition.

{
These examples illustrate practical deployment scenarios under the LTI (or local-LTI) assumption. Extending the proposed direct IOC principle to fully nonlinear settings is an interesting direction for future work.
}

{
\section{Data-Efficiency Comparison with MaxEnt IRL}
\label{app:data_efficiency_maxent}
We provide a more detailed, supplementary data efficiency comparison with MaxEnt IRL \cite{ziebart2008maximum}. See the summarized comparison in main text Table \ref{tab:baseline}.
We use the same finite-horizon LQ simulation setting as in Section 5.1 and vary the available data amount (I/O trajectory $(u,y)$ steps). As shown in Fig.~\ref{fig:eoc_compare_appendix}, the error of proposed KKT-based 3DIOC drops once the data length exceeds the minimal requirement for the Hankel-based construction, and then quickly saturates at a low level; in contrast, the MaxEnt IRL baseline improves more gradually and requires substantially more data (on the order of $10^2$--$10^3$ samples in our setting) to reach a comparable accuracy. This result explicitly validates the data efficiency of our algorithm.
}
\begin{figure}[htbp]
{
    \centering
    \includegraphics[width=0.8\linewidth]{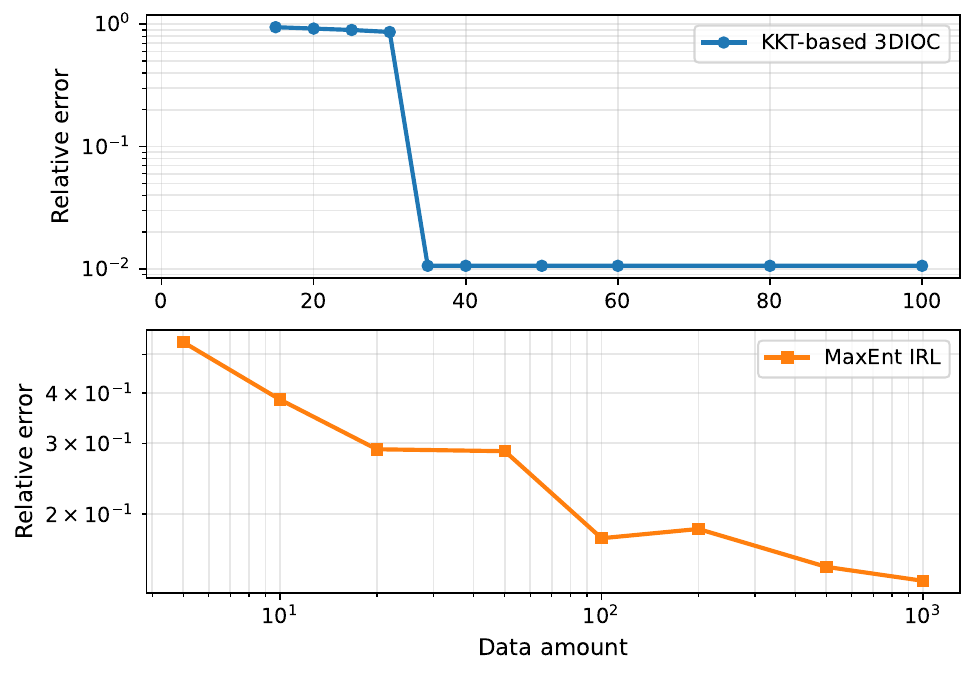}
    \caption{Relative estimation error versus data amount (trajectory steps) for the proposed KKT-based 3DIOC and MaxEnt IRL.}
    \label{fig:eoc_compare_appendix}
}
\end{figure}

\bibliographystyle{unsrt} 
\bibliography{ref}

\textbf{Chendi Qu} received the B.E. degree in the Department of Automation from Tsinghua University, Beijing, China, in 2021. 
She is currently working toward the Ph.D. degree with the Department of Automation, Shanghai Jiao Tong University, Shanghai, China. 
She is a member of Intelligent Wireless Networks and Cooperative Control group. 
Her research interest lies in the intersection of optimal control and optimization. She currently focuses on learning from demonstrations (inverse optimal control) applied for robots.

\textbf{Jianping He} 
(SM’19) is an Associate Professor in the Department of
Automation at Shanghai Jiao Tong University. He received the Ph.D. degree in control science and engineering from Zhejiang University, Hangzhou, China, in 2013, and had been a research fellow in the Department of Electrical and Computer Engineering at University of Victoria, Canada, from Dec. 2013 to Mar. 2017. His research interests mainly include the distributed learning,
control and optimization, security and privacy in network systems.

Dr. He serves as an Associate Editor for IEEE Trans. Control of Network Systems, IEEE Open Journal of Vehicular Technology, and KSII Trans. Internet and Information Systems. He was also a Guest Editor of IEEE TAC, IEEE TII, International Journal of Robust and Nonlinear Control, etc. He was the winner of Outstanding Thesis Award, Chinese Association of Automation, 2015. He received the best paper award from IEEE WCSP'17, the best conference paper
award from IEEE PESGM'17, and was a finalist for the best student paper award from IEEE ICCA'17, and the finalist best conference paper award from IEEE VTC'20-FALL.

\textbf{Xiaoming Duan} 
is an assistant professor in the Department of Automation at Shanghai Jiao Tong University.
He obtained his B.E. degree in Automation from the
Beijing Institute of Technology in 2013, his Master’s Degree in Control Science and Engineering from Zhejiang
University in 2016, and his Ph.D. degree in Mechanical
Engineering from the University of California at Santa
Barbara in 2020. He was a postdoctoral fellow in
the Oden Institute for Computational Engineering and
Sciences at the University of Texas at Austin in 2021.
His research interests include robotics, multi-agent
systems, and autonomous systems.

\end{document}